\newtheorem{theorem}{Theorem}[section]
\newtheorem{THM}{Theorem}
\newtheorem{lemma}[theorem]{Lemma}
\newtheorem{proposition}[theorem]{Proposition}
\newtheorem{defn}[theorem]{Definition}
\newtheorem{example}[theorem]{Example}
\newcommand{\bb}[1]{\mathbb{#1}}
\newcommand{\cal}[1]{\mathcal{#1}}
\newcommand{\TF}{T_{\mathcal F}}
\newcommand{\TG}{T_{\mathcal G}}
\newcommand{\NF}{N_{\mathcal F}}
\newcommand{\KF}{K_{\mathcal F}}
\DeclareMathOperator{\Pic}{Pic}
\DeclareMathOperator{\Div}{Div}
\DeclareMathOperator{\Inv}{Inv}
\DeclareMathOperator{\QInv}{QInv}
\DeclareMathOperator{\codim}{codim}
\DeclareMathOperator{\Frob}{Frob}
\DeclareMathOperator{\coker}{coker}
\DeclareMathOperator{\supp}{supp}
\title{Hypersurfaces quasi-invariant by codimension one foliations}
\author[Pereira]{Jorge Vitório Pereira}
\address{J. V. Pereira, IMPA, Estrada Dona Castorina 110, 22460-320, Rio de Janeiro, Brazil}
\email{\href{mailto:jvp@impa.br}{jvp@impa.br}}
\urladdr{\href{http://www.impa.br/~jvp}{http://www.impa.br/~jvp}}
\author[Spicer]{Calum Spicer}
\address{C. Spicer, Department of Mathematics, Imperial College London, England}
\email{\href{mailto:calum.spicer@imperial.ac.uk}{calum.spicer@imperial.ac.uk}}
\urladdr{\href{https://sites.google.com/view/calumspicer/home}{https://sites.google.com/view/calumspicer/home}}
\subjclass[2010]{37F75, 14E30}
\keywords{Foliations, quasi-invariant divisors,  extremal rays}
\begin{document}

\begin{abstract}
We present a variant of the classical Darboux-Jouanolou Theorem. Our main result provides a
characterization of foliations which are pull-backs of foliations on surfaces by rational maps.
As an application, we provide a structure theorem for foliations on $3$-folds admitting an infinite
number of extremal rays.
\end{abstract}

\maketitle

\setcounter{tocdepth}{1}

\section{Introduction}
In this paper, we study codimension one singular holomorphic foliations
on projective manifolds. More specifically, we introduce and investigate the concept of quasi-invariant hypersurfaces: an
irreducible hypersurface $H$ is quasi-invariant by a foliation $\cal F$ if it is not $\cal F$ invariant, but
the restriction of the foliation $\cal F$ to $H$ is an algebraically integrable foliation, i.e. every leaf of $\cal F_{|H}$
is algebraic.

\subsection{Statement of the main result} Rational pull-backs of
foliations on projective surfaces provide natural examples with infinitely many quasi-invariant divisors. Our main result shows
that the existence of sufficiently many quasi-invariant hypersurfaces characterizes this class of foliations.

\begin{THM}\label{THM:quasiJouanolou}
Let $\cal F$ be a codimension one holomorphic foliation on a projective  manifold $X$.
If $\cal F$ admits infinitely many quasi-invariant hypersurfaces then   $\cal F$ is algebraically integrable, or $\cal F$ is a pull-back of a foliation
on a projective surface under a dominant rational map.
\end{THM}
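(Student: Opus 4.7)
The plan is to organise the data from the quasi-invariant hypersurfaces $H_i$ into a single algebraic family of codimension-two cycles tangent to $\mathcal F$, and then to split into cases according to the dimension of this family. For each $H_i$, algebraic integrability of $\mathcal F|_{H_i}$ produces a rational first integral $\phi_i\colon H_i\dashrightarrow B_i$ onto a smooth curve, whose general fibre is a subvariety of $X$ of codimension two contained in a leaf of $\mathcal F$. As $b$ varies in $B_i$, these fibres sweep out an irreducible curve $\Gamma_i$ inside $\mathrm{Chow}(X)$ whose points are codimension-two cycles tangent to $\mathcal F$.

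The first step is to show that infinitely many of the curves $\Gamma_i$ lie in a single irreducible component $\mathcal C_0\subset \mathrm{Chow}(X)$. This amounts to bounding, with respect to a fixed polarisation of $X$ and uniformly in $i$, the degree of a general fibre of $\phi_i$. The natural approach is to use the adjunction formula $K_{\mathcal F|_{H_i}}=(K_{\mathcal F}+H_i)|_{H_i}$, combined with positivity results for the canonical class of an algebraically integrable foliation (of Miyaoka--Bogomolov--McQuillan type). I expect this boundedness step to be the main technical obstacle; once it is available, the remainder of the proof reduces to a clean dimension argument.

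Granting the existence of $\mathcal C_0$, set $d=\dim\mathcal C_0$. The Zariski-density of $\bigcup_i H_i$ forces the cycles of $\mathcal C_0$ to cover $X$, so $d\geq 2$. If $d=2$, a resolution of $\mathcal C_0$ is a projective surface $S$, and the universal cycle provides a dominant rational map $\pi\colon X\dashrightarrow S$ whose general fibre is a codimension-two cycle tangent to $\mathcal F$. The inclusion $\ker d\pi\subseteq T\mathcal F$ on the general fibre yields a descent of $\mathcal F$ to a codimension-one foliation $\mathcal G$ on $S$ satisfying $\mathcal F=\pi^{\ast}\mathcal G$, placing us in the second alternative. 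If instead $d\geq 3$, then through a generic point $p\in X$ there passes a positive-dimensional algebraic family of cycles of $\mathcal C_0$, all contained in the leaf $L_p$ of $\mathcal F$ through $p$; an incidence-dimension count shows that their union is an algebraic subvariety of $X$ of dimension $\dim X-1$ contained in $\overline{L_p}$, forcing $\overline{L_p}$ to be algebraic. Since $p$ was generic, every leaf of $\mathcal F$ is algebraic, which is the first alternative.
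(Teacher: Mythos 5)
Your overall architecture (package the algebraic leaves of the restrictions $\mathcal{F}|_{H_i}$ into the Chow variety, then argue on the dimension of the resulting family) is genuinely different from the paper's argument, but it has a gap at exactly the step you flag as ``the main technical obstacle'', and that step is not a technicality: it is where essentially all the difficulty of the theorem lives. To place infinitely many of the curves $\Gamma_i$ in a single irreducible component $\mathcal{C}_0\subset \mathrm{Chow}(X)$ you need a bound, uniform in $i$, on the degree $A\cdot F_i$ (for a fixed polarization $A$) of a general algebraic leaf $F_i$ of $\mathcal{F}|_{H_i}$. Adjunction gives $K_{\mathcal{F}|_{H_i}}=(K_{\mathcal{F}}+H_i)|_{H_i}$ and hence, for a general fibre, something like $(K_{\mathcal{F}}+H_i)\cdot F_i = 2g(F_i)-2$; this relates the genus of $F_i$ to intersection numbers involving $H_i$, whose degree is unbounded as $i$ varies, and gives no control on $A\cdot F_i$. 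Bogomolov--McQuillan-type positivity bounds invariants of an algebraically integrable foliation in terms of a fixed polarization and the foliated canonical class, but here the foliations $\mathcal{F}|_{H_i}$ live on a sequence of hypersurfaces of unbounded degree, so nothing uniform comes out. A uniform degree bound for the leaves of $\mathcal{F}|_{H_i}$ is, a posteriori, a consequence of the structure theorem (in the pull-back case they are fibres of a single rational map $p:X\dashrightarrow S$), so an a priori proof of it would have to carry most of the weight of the theorem; it cannot be assumed.

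For contrast, the paper's proof needs no boundedness at all. Finite-dimensionality of $NS(X)$ extracts from the infinitely many quasi-invariant hypersurfaces two $S^1$-flat divisors $D_1,D_2$ supported on invariant and quasi-invariant hypersurfaces; each carries a closed logarithmic $1$-form and a real first integral, and the analysis is carried out on the restriction of these data to a single general leaf $L$ of $\mathcal{F}$ (after reducing to a three-dimensional general complete intersection and to non-dicritical singularities, so that the pullbacks of the $D_i$ to $L$ have compact connected components). Either $j^*D_2$ meets the relevant component $E$ of $j^*D_1$, and a Hartshorne-type criterion forces $L$ to be algebraic, or it does not, and comparing the two transverse structures produces a holomorphic first integral near $E$, hence a moving compact curve in $L$, from which either algebraicity of $L$ or a subfoliation by algebraic curves (hence the pull-back structure) follows. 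Quasi-invariance enters only through compactness of the leaves of $\mathcal{F}|_{H_i}$, never through their degrees. The later steps of your outline (descent of $\mathcal{F}$ along $\pi$ when $d=2$, the incidence count when $d\ge 3$) are essentially sound, but the missing boundedness is the statement you would have to prove first, and I do not see how to obtain it by adjunction alone.
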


Indeed, we prove a  stronger statement in Section \ref{S:proof}, see  Theorem \ref{T:quasiJouanolou}.

Theorem \ref{THM:quasiJouanolou} has to be compared with, and was inspired by, Darboux-Jouanolou's criterion for the algebraic integrability of codimension one foliations.
We take the opportunity to revisit this criterion and we provide a small improvement of it in Section \ref{S:Jouanolou}, see  Theorem \ref{T:Jouanolou}.

\subsection{A version in positive characteristic}
In \cite{Kim97}, an analogue of Darboux-Jouanolou's Theorem was proved for
codimension one foliations on  smooth projective varieties defined over  fields of positive characteristic,
under the assumption that in the ambient variety
every global differential $1$-form is closed. This extra assumption was
later proved to be superfluous by Brunella and Nicolau in \cite{BN99}.

We show that Brunella-Nicolau's argument can be adapted to prove the following analogue
of Theorem \ref{THM:quasiJouanolou}.

\begin{THM}\label{THM:charp}
Let $X$ be a smooth projective variety defined over a field $k$ of characteristic $p>0$, and let $\cal F$ be a codimension one foliation
on $X$. If $\cal F$ admits infinitely many quasi-invariant hypersurfaces then $\cal F$ is $p$-closed,  or $\cal F$ is a pull-back of a foliation
on a projective surface under a dominant rational map.
\end{THM}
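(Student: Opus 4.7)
The plan is to mirror, step by step, the argument for the stronger Theorem \ref{T:quasiJouanolou} developed in Section \ref{S:proof}, substituting the classical Darboux-Jouanolou criterion wherever it appears by its positive characteristic refinement due to Brunella and Nicolau \cite{BN99}.

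To each quasi-invariant hypersurface $H$ one associates a rational first integral of $\cal F_{|H}$, and this produces a codimension-two distribution $\cal G_H \subset \cal F$ along $H$. Infinitely many such $H_i$ force these local data to be globally compatible, yielding a codimension-two subfoliation $\cal G \subseteq \cal F$ defined on all of $X$ whose restriction to each of infinitely many $H_i$ has algebraic leaves. Applying the Brunella-Nicolau criterion to $\cal G$ (or, more accurately, to an auxiliary codimension-one foliation built from $\cal G$), one obtains that $\cal G$ is $p$-closed and that its space of leaves is parametrized by a dominant rational map $\pi \colon X \dashrightarrow S$ onto a projective surface. A standard dichotomy then finishes the argument: either $\cal F$ descends through $\pi$, giving the pullback presentation $\cal F = \pi^{*} \cal H$ as in the theorem; or $\cal F$ is tangent to the fibers of $\pi$, so that $\cal F$ coincides with $\cal G$ up to Frobenius pullback and is itself $p$-closed.

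The crux is the construction of $\cal G$ in positive characteristic, where the differential kills $p$-th powers and naive first-integral arguments collapse. Following the template of \cite{BN99}, I would replace first-integral reasoning on each $H_i$ by statements about the Cartier operator $C$, exploiting the fact that both Frobenius pullback and $C$ preserve the integrability relation $\omega \wedge d\omega = 0$. Combining these Cartier-compatible congruences across the $H_i$ should produce the additional $1$-form that, together with $\omega$, codefines $\cal G$, bypassing any need for global $1$-forms to be closed. Once $\cal G$ is in hand, the extraction of $\pi$ and the concluding dichotomy are essentially insensitive to the characteristic.
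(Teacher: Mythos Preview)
Your proposal is too sketchy to be a proof, and it rests on a misreading of the positive-characteristic setup. In characteristic $p$ the paper defines a hypersurface $H$ to be quasi-invariant when $\cal F_{|H}$ is $p$-closed, not algebraically integrable (Definition~\ref{d:charpqinv1}); there is in general no rational first integral to attach to $H$, so your opening step --- extracting a local codimension-two distribution from such a first integral --- does not get off the ground. Likewise, the plan to ``mirror, step by step'' the proof of Theorem~\ref{T:quasiJouanolou} cannot work: that argument is built on $S^1$-flat divisors, real first integrals of the form $|e^{\int\omega}|$, topological closures of leaves, and Hilbert-scheme deformations, none of which have meaningful analogues over a field of characteristic $p$. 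Invoking Brunella--Nicolau to produce a dominant rational map to a surface is also off: their result gives $p$-closedness of a codimension-one foliation with many invariant hypersurfaces, not a fibration.

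The paper's actual proof follows an entirely different route and does not use the Cartier operator. Assuming $\cal F$ is not $p$-closed, the $p$-th power map $\Frob^*\TF \to T_X/\TF$, $v \mapsto v^p \bmod \TF$, is generically surjective, and its kernel equals $\Frob^*\TG$ for a rank $n-2$ saturated subsheaf $\TG \subset \TF$ defined \emph{globally and a priori}, with no reference to any particular hypersurface. The quasi-invariant $D_i$ enter only afterwards: choosing a local commuting frame $v_{i,1},\dots,v_{i,n-1}$ of $\TF$ adapted to $D_i$, one computes that the generators $w_k$ of $\TG$ leave every $D_i$ invariant, and hence so do $[w_j,w_k]$ and $w_k^p$. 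Since these are already tangent to $\cal F$, their tangency loci with $\cal G$ contain infinitely many hypersurfaces and must be all of $X$; this is what forces $\cal G$ to be involutive and $p$-closed. One then passes to the purely inseparable quotient $\rho:X\to Y$ with $\ker d\rho=\TG$, composes with a \emph{general} rational map $Y\dashrightarrow S$ to a smooth surface, Stein-factors, and pushes down $\ker(\phi^*\Omega^1_T\to\Omega^1_{\overline{\cal F}})$ to obtain the foliation on the surface pulling back to $\cal F$.
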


The concepts used in the statement of Theorem \ref{THM:charp} are recalled/presented in Section \ref{S:charp review}.

\subsection{Structure of the cone of curves}
The second author established in \cite{Spicer17} a cone theorem which describes the structure of the  Kleiman-Mori cone of curves
in terms of  numerical properties of the canonical bundle $\KF$ of a codimension one foliation $\cal F$ on a projective
$3$-dimensional variety.  We were lead to the definition of quasi-invariant divisors while trying
to understand the implications of this result on the geometry/dynamics of the original foliation.
As a corollary of Theorem \ref{THM:quasiJouanolou} we obtain the following statement.

\begin{THM}\label{THM:cone}
Let $\cal F$ be a codimension one foliation with non-dicritical canonical singularities on a connected $3$-dimensional $\bb Q$-factorial
projective variety $X$ with at worst terminal singularities.
If the $\KF$-negative portion of the cone of curves contains
infinitely many extremal rays then (at least) one of the following assertions holds true.
\begin{enumerate}
\item the foliation $\cal F$ is algebraically integrable and the closure of a general leaf is rational; or
\item there exists a finite number of rational $\cal F$-invariant hypersurfaces containing all but
finitely many extremal rays.
\end{enumerate}
\end{THM}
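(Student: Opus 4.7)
The plan is to combine the cone theorem of \cite{Spicer17} with Theorem \ref{THM:quasiJouanolou} applied to divisors swept out by deformations of extremal rational curves. For each $\KF$-negative extremal ray $R_i$ one invokes \cite{Spicer17} to produce a rational curve $C_i$ spanning $R_i$ with $-\KF\cdot C_i$ bounded uniformly in $i$; bend-and-break then yields a positive-dimensional deformation family whose sweep-out $B_i\subseteq X$ has dimension $2$ or $3$. After passing to a subsequence, I split the argument according to $\dim B_i$.

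Suppose first that $\dim B_i = 2$ for infinitely many $i$. If only finitely many distinct surfaces $B_i$ occur, the extremal-ray description in \cite{Spicer17} combined with the non-dicritical canonical hypothesis forces each such $B_i$ to be $\cal F$-invariant (a fixed surface housing infinitely many $\KF$-negative extremal classes cannot be merely transverse to $\cal F$), and these are the finitely many invariant hypersurfaces of conclusion (2). If instead infinitely many distinct surfaces $B_i$ appear, each $B_i$ is either $\cal F$-invariant or quasi-invariant: the $C_i$ are tangent to $\cal F$ by the extremal-ray description, so $\cal F|_{B_i}$ admits the deformation family as algebraic leaves, whence all its leaves are algebraic. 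Either infinitely many $B_i$ are invariant and (2) holds, or infinitely many are quasi-invariant. Suppose instead that $\dim B_i = 3$ for infinitely many $i$: then $X$ is uniruled by rational curves tangent to $\cal F$, and under non-dicriticality these curves must lie in the leaves, so $\cal F$ is algebraically integrable with rational general leaf and (1) holds.

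Once infinitely many quasi-invariant hypersurfaces have been produced, Theorem \ref{THM:quasiJouanolou} asserts that $\cal F$ is algebraically integrable or that $\cal F = \phi^{*}\cal G$ for a foliation $\cal G$ on a surface $S$ and a dominant rational map $\phi$. In the integrable case the rational curves $C_i$ lie in leaf-closures, forcing the general leaf to be rational and yielding (1). In the pull-back case, the refined Jouanolou criterion (Theorem \ref{T:Jouanolou}) applied to $\cal G$ on $S$ splits again: if $\cal G$ is algebraically integrable then so is $\cal F$, recovering (1); otherwise $\cal G$ has only finitely many invariant curves, whose pull-backs under $\phi$ are finitely many rational $\cal F$-invariant hypersurfaces, and every quasi-invariant $B_i$, being swept out by curves that are mapped to $\cal G$-algebraic loci, must lie in one of them, yielding (2). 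The main obstacle I anticipate is the three-dimensional regime: ruling out an infinite collection of extremal rational families uniruling $X$ without producing algebraic integrability requires the full force of \cite{Spicer17}'s extremal-ray structure together with the non-dicritical hypothesis, and verifying that tangent rational curves actually sweep out leaves (rather than merely lying in the singular locus of $\cal F$) is the delicate point.
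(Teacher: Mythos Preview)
Your proposal has a substantial gap: it does not handle extremal rays of \emph{flipping type}, i.e.\ rays $R$ with $\dim\text{loc}(R)=1$. Your bend-and-break step asserts that each sweep-out $B_i$ has dimension $2$ or $3$, but a flipping ray is represented by finitely many rigid curves, so no positive-dimensional family exists and $\dim B_i=1$. This is not a technicality: the paper devotes the bulk of its proof to precisely this case, via an entirely different mechanism. Lemma \ref{L:intersects} shows each flipping curve meets $\text{sing}(X)$ or lies in $\text{sing}(\cal F)$; since terminal threefold singularities are isolated and $\text{sing}(\cal F)$ has only finitely many one-dimensional components, infinitely many flipping curves must pass through a single point $p\in\text{sing}(X)$. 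Then \cite[Corollary 6.4]{Spicer17} produces an $\cal F$-invariant analytic surface $L$ through $p$ containing them all, and the paper proves $L$ is algebraic by exhibiting a compact curve $\Sigma\subset L$ (built from an exceptional component over $p$ together with enough strict transforms $C'_i$) whose normal bundle in $L$ is ample, so that \cite[Theorem 6.7]{Hartshorne68} applies. Rationality of $L$ follows from Lemma \ref{L:infintelymany-1curves}, since the $C'_i$ are $(-1)$-curves on the minimal resolution. None of this appears in your outline, and the ``main obstacle'' you flag at the end (the three-dimensional regime) is in fact the easy case.

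A second gap sits in the divisorial case. You claim that infinitely many $\cal F$-invariant $B_i$ already yields conclusion (2), but (2) demands \emph{finitely many} invariant hypersurfaces containing the rays, which you have not produced. The paper argues instead that this situation is impossible: Theorem \ref{T:Jouanolou} applied to infinitely many invariant divisors gives a fibration over a curve whose fibres contain the $D_i$, forcing all but finitely many of them into a single numerical class and contradicting the distinctness of the $R_i$. Likewise, in the pull-back subcase after Theorem \ref{THM:quasiJouanolou}, the paper does not argue that the $B_i$ lie over $\cal G$-invariant curves as you suggest; it observes that the map to the surface contracts the $C_i$, again forcing a common numerical class and a contradiction. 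Hence in the divisorial regime $\cal F$ must be algebraically integrable, and rationality of the general leaf $M$ comes from noting that each $M\cap D_i$ is a $(-1)$-curve and invoking Lemma \ref{L:infintelymany-1curves}.
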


The definitions of extremal rays and canonical singularities are recalled in Section \ref{S:cone}, where the precise statement
of the cone theorem is also recalled, cf. \ref{T:cone}.

\subsection{Acknowledgments}
The results presented here were achieved in a visit of J.V. Pereira to Imperial College London
followed by a  visit  of C. Spicer to FRIAS, Freiburg. We acknowledge the support of Paolo Cascini and Stefan Kebekus
who made these visits possible. J.V. Pereira was partially supported by Cnpq and FAPERJ.

\section{Foliations, invariant and quasi-invariant divisors}\label{S:Jouanolou}

\subsection{Foliations}
A foliation $\cal F$ on a projective manifold $X$ is determined by an involutive and saturated subsheaf
$\TF$ of the tangent sheaf of $X$. The sheaf $\TF$ is the tangent sheaf of $\cal F$. The dimension of $\cal F$ is the rank of
$\TF$, and its codimension is the corank of $\TF$ inside $T_X$. The singular set
of $\cal F$ is the singular set of the sheaf $T_X/\TF$. As we are assuming $\TF$ saturated inside $T_X$,
the singular set of $\cal F$ is of codimension at least two. Away from the singular set, the Frobenius Theorem
provides a decomposition of  sufficiently small open (in the Euclidean topology) subsets $U \subset X$
in level sets of   submersions $f: U \to  V\subset \mathbb C^q$, where $q = \codim \cal F$. Maximal analytic continuation
of a level set of $f$ inside $X- \text{sing}(\cal F)$ is a leaf of the foliation $\cal F$.

The dual of $\TF$ is the cotangent sheaf of $\cal F$ and will be denoted by $\Omega^1_{\cal F}$. As the notation suggests,
the local sections of $\Omega^1_{\cal F}$ can be interpreted as $1$-forms defined along the leaves of the foliation $\cal F$.

Alternatively, a foliation $\cal F$ can be presented by a subsheaf $N^*_{\cal F}$ of $\Omega^1_X$, which is integrable and
saturated.  The sheaf $N^*_{\cal F}$ is called the conormal sheaf of $\cal F$. Its dual will be denote by $N_{\cal F}$ and is
called the normal sheaf of $\cal F$. One can recover $\TF$ from $N^*_{\cal F}$ by considering the subsheaf of $T_X$ which annihilates
every element of $N^*_{\cal F}$. The natural analogous construction allows us to recover $N^*_{\cal F}$ from $\TF$.

\subsection{Invariant divisors}
Let $\mathcal F$ be a codimension one foliation on a projective manifold $X$. We will denote
by $\Inv(\mathcal F)$  the subgroup of $\Div(X)$ generated by irreducible $\cal F$-invariant hypersurfaces.

\begin{proposition}
If $D$ is a $\cal F$-invariant divisor then the restriction of differential forms to leaves
of $\cal F$ defines a morphism
\[
    H^0(X, \Omega^1_X(\log D)) \to H^0(X, \Omega^1_{\cal F}) \, .
\]
\end{proposition}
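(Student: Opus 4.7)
The plan is to exploit the fact that $\cal F$-invariance of $D$ is exactly the inclusion of subsheaves $\TF \subset T_X(-\log D)$ inside $T_X$, where $T_X(-\log D)$ denotes the sheaf of derivations preserving the ideal sheaf $\cal I_D$; this is essentially the definition. The apparent obstacle---that the log poles of a section $\omega$ of $\Omega^1_X(\log D)$ might prevent its restriction to leaves of $\cal F$ from being regular along $D$---dissolves once one notes that $\cal F$-invariance forces the pole direction $df$ of any local equation of $D$ to be annihilated by $\TF$.

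To formalize this, I would combine the inclusion $\TF \subset T_X(-\log D)$ with the standard evaluation pairing $\Omega^1_X(\log D) \otimes T_X(-\log D) \to \cal O_X$ to produce a sheaf morphism
\[
    \Omega^1_X(\log D) \longrightarrow \cal Hom(\TF, \cal O_X) \;=\; \Omega^1_\cal F,
\]
whence the proposition by passage to global sections. Regularity of the pairing is immediate: near a smooth point of $D$ cut out by $f=0$, every log form writes as $\omega = g\,\frac{df}{f} + \eta$ with $\eta \in \Omega^1_X$, while any $v \in T_X(-\log D)$ satisfies $v(f) = f h$ with $h$ regular, so that $\omega(v) = gh + \eta(v) \in \cal O_X$.

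Alternatively, one can check the extension at the level of stalks. Since $\Omega^1_\cal F = \cal Hom(\TF, \cal O_X)$ is reflexive on the smooth variety $X$, it suffices to produce $\omega|_\cal F$ near a general point of (a component of) $D$, away from the codimension-two singular loci of $\cal F$ and of $D$. In local coordinates $(x_1, \ldots, x_n)$ chosen there with $D = \{x_n = 0\}$ and $\cal F$ defined by $dx_n$, writing $\omega = \sum_{i < n} a_i\, dx_i + g\,\frac{dx_n}{x_n}$ shows that on each leaf $\{x_n = c\}$, $c \neq 0$, the restriction equals $\sum_{i<n} a_i|_{x_n = c}\, dx_i|_{x_n = c}$, which extends regularly by zero across $D$. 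In either approach, the only real content is this local computation; the main (minor) subtlety is simply identifying the correct sheaf-theoretic translation of invariance.
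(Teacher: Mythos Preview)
Your proposal is correct and follows essentially the same approach as the paper: the paper's one-line proof simply asserts that contracting a section of $\Omega^1_X(\log D)$ with any local section of $\TF$ yields a holomorphic function by $\cal F$-invariance of $D$, and your first argument is exactly this, made explicit via the inclusion $\TF \subset T_X(-\log D)$ and the standard pairing. One small quibble: in your alternative coordinate approach, the phrase ``extends regularly by zero across $D$'' is misleading---the restriction $\sum_{i<n} a_i\,dx_i$ extends regularly, but not to zero; presumably you mean that the polar term $g\,\tfrac{dx_n}{x_n}$ contributes nothing to the leafwise restriction.
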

\begin{proof}
    For any local section $v \in \TF(U)$ of the tangent sheaf of $\cal F$, the contraction of $v$ with a section
    of $\Omega^1_X(\log D)$ is holomorphic due to the $\cal F$-invariance of $D$.
\end{proof}

Recall that a holomorphic $\cal F$-partial connection on  coherent sheaf $\mathcal E$ is a $\mathbb C$-linear
morphism $\nabla :  \mathcal E \to \mathcal E \otimes_{\mathbb C} \Omega^1_{\cal F}$ satisfying Leibniz rule, i.e.
\[
    \nabla(f \cdot \sigma)  = f \cdot \nabla(\sigma) + \sigma \otimes (df)_{|\TF}
\]
where $f$ is a local section of $\mathcal O_X$ and $\sigma$ is a local section of $\mathcal E$.

\begin{proposition}
If $D \in \Inv(\cal F)$  is a $\cal F$-invariant divisor then the line-bundle $\cal O_X(D)$ admits
a holomorphic $\cal F$-partial connection.
\end{proposition}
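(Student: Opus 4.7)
The plan is to construct the partial connection explicitly from local defining equations, using the previous proposition to guarantee regularity along leaves.

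By linearity it suffices to treat the case $D = \sum n_i D_i$ where each $D_i$ is an irreducible $\cal F$-invariant hypersurface. Choose an open cover $\{U_\alpha\}$ of $X$ trivializing $\cal O_X(D)$ and local defining equations $f_{i,\alpha} \in \cal O_X(U_\alpha)$ for $D_i \cap U_\alpha$. Set $g_\alpha = \prod_i f_{i,\alpha}^{n_i}$; the transition functions of $\cal O_X(D)$ with respect to these trivializations are $g_{\alpha\beta} = g_\alpha / g_\beta \in \cal O^*_X(U_\alpha \cap U_\beta)$, so that a section $\sigma$ of $\cal O_X(D)$ corresponds to a tuple of holomorphic functions $\sigma_\alpha$ with $\sigma_\alpha = g_{\alpha\beta} \sigma_\beta$.

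On each $U_\alpha$, define the logarithmic $1$-form
\[
\omega_\alpha = \sum_i n_i \, \frac{df_{i,\alpha}}{f_{i,\alpha}} \in H^0(U_\alpha, \Omega^1_X(\log D)).
\]
Because each $D_i$ is $\cal F$-invariant, the previous proposition shows that $\omega_\alpha$ restricts to a bona fide holomorphic section of $\Omega^1_{\cal F}$ over $U_\alpha$, which we still denote $\omega_\alpha$. Now declare
\[
\nabla(\sigma)_\alpha \;=\; d(\sigma_\alpha)_{|T_{\cal F}} + \sigma_\alpha\, \omega_\alpha.
\]
I would then verify two things. First, these local formulas glue: on $U_\alpha \cap U_\beta$ one has $\omega_\alpha - \omega_\beta = (dg_{\alpha\beta}/g_{\alpha\beta})_{|T_{\cal F}}$, which combined with $\sigma_\alpha = g_{\alpha\beta}\sigma_\beta$ and the Leibniz rule for the exterior derivative on $\cal O_X$ shows that $\nabla(\sigma)_\alpha = g_{\alpha\beta}\,\nabla(\sigma)_\beta$, so the local sections patch to a global section of $\cal O_X(D) \otimes \Omega^1_{\cal F}$. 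Second, that $\nabla$ satisfies Leibniz: for $f \in \cal O_X(U_\alpha)$ one computes $\nabla(f\sigma)_\alpha = d(f\sigma_\alpha)_{|T_{\cal F}} + f\sigma_\alpha \omega_\alpha = f \nabla(\sigma)_\alpha + \sigma_\alpha (df)_{|T_{\cal F}}$, which is the required identity.

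The only delicate point, and the place where the invariance hypothesis enters essentially, is the verification that each $\omega_\alpha$ is holomorphic when restricted to $T_{\cal F}$; if $D_i$ were not $\cal F$-invariant, contracting $df_{i,\alpha}/f_{i,\alpha}$ with a section of $\TF$ would produce poles along $D_i$, destroying the construction. Since we have invariance, the previous proposition takes care of this step and the rest of the argument is the routine gluing/Leibniz check outlined above.
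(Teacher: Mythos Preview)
Your approach is exactly the paper's: build the flat logarithmic connection on $\cal O_X(D)$ from logarithmic differentials of local defining equations, then invoke $\cal F$-invariance (via the previous proposition) to see that its restriction to $\TF$ is holomorphic. One minor correction: with your conventions ($g_{\alpha\beta}=g_\alpha/g_\beta$, $\sigma_\alpha=g_{\alpha\beta}\sigma_\beta$, hence $\omega_\alpha-\omega_\beta=dg_{\alpha\beta}/g_{\alpha\beta}$) the local formula that actually glues is $\nabla(\sigma)_\alpha = d(\sigma_\alpha)_{|\TF} - \sigma_\alpha\,\omega_\alpha$, not $+\,\sigma_\alpha\,\omega_\alpha$; once you flip this sign the gluing and Leibniz checks go through as you wrote them.
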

\begin{proof}
    The logarithmic differentials of local defining functions of $D$ define a flat logarithmic connection $\nabla$
    on $\cal O_X(D)$ with residue divisor equal to $D$. The $\cal F$-invariance of $D$ implies that
    the restriction of $\nabla$ to $\TF$ is holomorphic.
\end{proof}

It was observed by Bott that the normal sheaf of a foliation $\cal F$ admits a natural $\cal F$-partial connection,
the so called Bott's partial connection. For codimension one foliations  Bott's connection can be locally defined
as follows. Let $\omega$ be a local generator of $N^*_{\cal F}$. The integrability assumption implies that the existence
of a meromorphic $1$-form $\eta$ such that
\[
    d \omega = \omega \wedge \eta \, .
\]
The $1$-form $\eta$ is not unique, one can replace $\eta$ by $\eta + h \omega$ for an arbitrary meromorphic function $h$. Nevertheless,
if $v$ is a local section of $\TF$ then the contraction of $v$ with $\eta$ does not depend on $h$ and, furthermore, it is a holomorphic
function. In other words, the restriction of the  $1$-form $\eta$ to $\TF$ defines a local holomorphic section of $\Omega^1_{\cal F}$.
Bott's connection is the connection on $N_{\cal F}$ locally defined by $\eta_{|\TF}$.

\subsection{Jouanolou's Theorem}
If $X$ is a projective manifold then we will denote by $NS(X)$  the Néron-Severi group of $X$, i.e. $NS(X)$ is the group
of divisors modulo numerical equivalence.  The tensor product of
$NS(X)$ with $\mathbb C$ injects into $H^2(X,\mathbb C)$ and  is the natural target for the Chern class map
$c : \Div(X) \otimes \mathbb C \to NS(X)\otimes X$.

Below we present a small improvement on Darboux-Jouanolou Theorem, compare with  \cite{Jouanolou}, \cite{Ghys00} and \cite[Theorem 6.1]{Brunella00}.
It involves the concept of a transversely affine foliation. We recall that a codimension one foliation $\cal F$ is called transversely
affine, if there exists a flat meromorphic connection on the normal sheaf of $\cal F$ which, when restricted to the leaves
of $\cal F$ coincides with  Bott's $\cal F$-partial connection. For a thorough discussion of this concept, and
other equivalent definitions the reader can consult \cite{MR3294560}.

\begin{theorem}\label{T:Jouanolou}
Let $\cal F$ be a codimension one foliation on a projective  manifold $X$.
If $k$ is the number of $\cal F$-invariant hypersurfaces and $\ell$ is the integer $\dim NS(X) + h^0(X,\Omega^1_{\cal F}) - h^0(X,\Omega^1_X)$
then the following assertions hold true.
\begin{enumerate}
\item If  $k \ge \ell$ then $\cal F$ is transversely affine.
\item If $k \ge \ell +1$ then $\cal F$ is defined by a closed logarithmic $1$-form.
\item If $k \ge  \ell+2$ then $\cal F$ is algebraically integrable.
\end{enumerate}
\end{theorem}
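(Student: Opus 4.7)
The plan is to refine the classical Darboux-Jouanolou argument by carefully counting global logarithmic 1-forms and then using the restriction-to-leaves map from the first Proposition as the new ingredient. Set $D = D_1 + \cdots + D_k$ where $D_1, \ldots, D_k$ are the $\cal F$-invariant hypersurfaces. After passing to a log-resolution of $D$ I may assume $D$ has simple normal crossings; the new exceptional components are automatically $\cal F$-invariant, and the quantities in the statement transform in a compatible way so the hypothesis is preserved.

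The residue exact sequence
\[
0 \longrightarrow \Omega^1_X \longrightarrow \Omega^1_X(\log D) \longrightarrow \bigoplus_{i=1}^k \mathcal{O}_{D_i} \longrightarrow 0
\]
together with the identification of the connecting homomorphism $\mathbb{C}^k \to H^1(\Omega^1_X)$ with the Chern class map $(\lambda_i) \mapsto \sum \lambda_i c_1(\mathcal{O}_X(D_i))$ factoring through $NS(X)\otimes \mathbb{C}$ yields
\[
\dim H^0\bigl(X,\Omega^1_X(\log D)\bigr) \geq h^0(\Omega^1_X) + k - \dim NS(X).
\]
Moreover, by Deligne's mixed Hodge theory every global section of $\Omega^1_X(\log D)$ is closed. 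The first Proposition provides a restriction-to-leaves map $r \colon H^0(X,\Omega^1_X(\log D)) \to H^0(X,\Omega^1_{\cal F})$, and combining the two bounds gives $\dim \ker r \geq k - \ell$. A nonzero element $\omega \in \ker r$ vanishes on $\TF$ at every point, so $\omega \wedge \omega_{\cal F} = 0$ for local defining forms $\omega_{\cal F}$ of $\cal F$, meaning that $\omega$ is itself a closed logarithmic 1-form defining $\cal F$.

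This already resolves cases (2) and (3). If $k \geq \ell + 1$ then $\dim \ker r \geq 1$ produces a closed logarithmic defining form, proving (2). If $k \geq \ell + 2$ then $\dim \ker r \geq 2$ gives two linearly independent such $\omega_1,\omega_2$; both are closed and proportional to the same local $\omega_{\cal F}$, so the meromorphic ratio $\omega_1/\omega_2$ is a non-constant first integral, proving (3).

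The genuine obstacle is case (1), where the naive count only gives $\dim\ker r \geq 0$ and one may not have a closed logarithmic defining form available. Here the conclusion is the weaker condition of transverse affinity, i.e.\ the existence of a flat meromorphic connection on $N_{\cal F}$ extending the Bott $\cal F$-partial connection. The strategy is to exploit the borderline equality: when $k \geq \ell$, the image of $r$ in $H^0(\Omega^1_{\cal F})$ has dimension at least $h^0(\Omega^1_{\cal F})$, forcing $r$ to be surjective. The Bott partial connection, viewed as a section of $\Omega^1_{\cal F}$ after a suitable twist, then lifts through $r$ to a global closed logarithmic $1$-form $\alpha$ on $X$ satisfying $d\omega_{\cal F} \equiv \omega_{\cal F}\wedge \alpha \pmod{\omega_{\cal F}}$, which is precisely the data of a transverse affine structure for $\cal F$ with poles on $D$. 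Verifying cleanly that this lift exists, in the presence of the ambiguity of the Bott form modulo $\omega_{\cal F}$, is the most delicate point in the argument.
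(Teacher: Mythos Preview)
Your treatment of items (2) and (3) is essentially the paper's argument: produce closed logarithmic $1$-forms via Hodge theory and the residue sequence, count dimensions against $H^0(X,\Omega^1_{\cal F})$, and in case (3) take the ratio of two independent defining forms. (One caveat: in your log-resolution step, exceptional divisors are \emph{not} automatically $\cal F$-invariant---this fails at dicritical points---but the SNC hypothesis is inessential and the paper avoids it.)

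The gap is in item (1). Surjectivity of $r:H^0(X,\Omega^1_X(\log D))\to H^0(X,\Omega^1_{\cal F})$ is not by itself enough, because Bott's partial connection is \emph{not} an element of $H^0(X,\Omega^1_{\cal F})$ that you can lift. Locally the Bott form $\eta$ satisfies $d\omega=\omega\wedge\eta$, but under a change $\omega\mapsto g\omega$ it transforms by $\eta\mapsto\eta+dg/g$; the restrictions $\eta|_{\TF}$ patch to a partial connection on $\NF$, not to a global section of $\Omega^1_{\cal F}$. Your phrase ``after a suitable twist'' hides exactly the missing step, and your final sentence correctly flags it as unresolved.

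What the paper does instead is bring the Chern class of $\NF$ into the count. One looks at the fiber of $c:\Inv(\cal F)\otimes\mathbb C\to NS(X)\otimes\mathbb C$ over $c(\NF)$: each $\cal F$-invariant divisor $D'$ with $c(D')=c(\NF)$ gives a flat logarithmic connection on $\NF$ (via $d\log$ of a meromorphic section), whose restriction to $\TF$ is holomorphic. The \emph{difference} of that restriction with Bott's partial connection is now a genuine element of $H^0(X,\Omega^1_{\cal F})$, and the hypothesis $k\ge\ell$ provides exactly enough parameters (the fiber over $c(\NF)$ together with the $H^0(X,\Omega^1_X)$-ambiguity) to kill it. That produces a flat meromorphic connection on $\NF$ extending Bott's, i.e.\ a transverse affine structure. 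To complete your argument you need this intermediate object---a reference flat connection on $\NF$ with invariant polar divisor---before surjectivity of $r$ becomes useful.
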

\begin{proof}
    Let $\Inv_0(\cal F) \otimes \bb C$ be the kernel of the restriction of the Chern class map to
    to $\Inv(\cal F)$.
    Basic Hodge theory associates to each element $D \in \Inv_0(\cal F)\otimes \mathbb C$
    a logarithmic $1$-form $\eta$
    with residue divisor equal to $D$. This association is not unique: $\eta$ is defined
    up to the addition of a global holomorphic $1$-form.

    The restriction of the logarithmic $1$-form $\eta$ to the leaves of $\cal F$ defines a holomorphic
    section of $\Omega^1_{\cal F}$, i.e. $\eta_{|\TF} \in H^0(X, \Omega^1_{\cal F})$.

    If the number $k$ of irreducible $\cal F$-invariant hypersurfaces is at least $\ell +1$ then we can produce a
    non-zero logarithmic $1$-form $\eta$ such that $\eta_{| \TF} = 0$. Hence $\eta$ is the sought logarithmic $1$-form
    defining $\cal F$. Item (2) follows.  If $k\ge \ell +2$ then we have two linearly independent logarithmic $1$-forms $\eta$ and $\eta'$
    which vanish along $\TF$. Since $\cal F$ is a codimension one foliation, there exists a non-constant rational function
    $f \in \mathbb C(X)$ such that $\eta = f \eta'$. Differentiation implies $df \wedge \eta' =0$. We conclude that $f$ is
    a rational first integral for $\cal F$. Item (3) follows.

    To prove item (1), consider the fiber of the Chern class map $c: \Inv_0(\cal F) \otimes \mathbb C \to H^2(X,\mathbb C)$ over
    the Chern class of the normal bundle. For each divisor $D$ such that $c(D) = c(\NF)$, we can construct a family, parameterized by $H^0(X,\Omega^1_X)$, of flat logarithmic connections
    on $\NF$ with residue divisor equal to $D$. If we restrict one such connection to a partial connection along the leaves of $\cal F$ then it will differ from Bott's
    partial connection by an element of $H^0(X, \Omega^1_{\cal F})$. Thus, if $k \ge \ell$ then at least one of such connections will have
    restriction to the leaves equal to Bott's partial connection. But this means exactly that $\cal F$ is transversely affine.
\end{proof}

\subsection{Examples}
The foliation $\mathcal H_5$ described in \cite[Theorem 2]{MP05} is a foliation on $\mathbb P^2$ with $\Omega^1_{\cal F} = \cal O_{\mathbb P^2}(4)$, $h^0(\mathbb P^2, \Omega^1_{\cal F})=15 = k-1$ invariant lines, and which is not transversely affine. This shows that item (1) is sharp.

The general Riccati foliation on $\mathbb P^1 \times \mathbb P^1$ with three invariant fibers (multiplicity one) and an invariant section
is a transversely affine foliation which is not transversely Euclidean. In this case, $\Omega^1_{\cal F} =  \cal O_{\mathbb P^1 \times \mathbb P^1}(1,0)$ and therefore
\[
  \dim NS(\mathbb P^1 \times \mathbb P^1 ) + h^0(\mathbb P^1 \times \mathbb P^1, \Omega^1_{\cal F}) - h^0(\mathbb P^1\times \mathbb P^1, \Omega^1_{\mathbb P^1\times \mathbb P^1})= 2 + 2 - 0= 4 \, .
\]
This shows that item (2) is sharp.

The general logarithmic $1$-form on $\mathbb P^2$
with poles along three distinct lines defines a foliation $\cal F$ with $\dim NS(\mathbb P^2) + h^0(\mathbb P^2, \Omega^1_{\cal F}) - h^0(\mathbb P^2, \Omega^1_{\mathbb P^2})= 2$,
three invariant lines, and which is not algebraically integrable. This shows that item (3) is sharp.

\subsection{Comparison}\label{S:comparison}
We have stated our version of Darboux-Jouanolou Theorem for projective manifolds, but it can be formulated for arbitrary compact complex
manifolds as in \cite{Ghys00}. For that, one has to take the integer $\ell$ equal to
\[
\dim \left\{ \coker \Div(X)\otimes \mathbb C \to H^1(X, \Omega^1_{X,cl}) \right\} + h^0(X, \Omega^1_{\cal F}) - h^0(X,\Omega^1_{X,cl})
\]
where $\Omega^1_{X,cl}$ stands for the $\mathbb C$-sheaf of closed holomorphic $1$-forms on $X$.

The bound presented in Theorem \ref{T:Jouanolou} item (3)  coincides with the bound
presented in \cite[Theorem 6.1]{Brunella00} and is slightly different from the bound
presented in \cite{Ghys00}, even if one specializes the latter to  projective manifolds.
The main difference comes from the use here of $H^0(X,\Omega^1_{\cal F})$ instead of
$H^0(X,\Omega^2_X\otimes \NF)$. To relate the dimensions of these two finite dimensional vector spaces observe
that the sheaf $\Omega^1_{\cal F}$ injects into $\Omega^2_X \otimes \NF$. Indeed, if $\omega \in H^0(X, \Omega^1_X \otimes \NF)$ is a twisted $1$-form defining $\cal F$ then
for any local section $\eta \in \Omega^1_{\cal F}(U)$ ($U$ sufficiently small) we can unambiguously define  $\omega \wedge \eta$ by considering a lift of $\eta_{|U - \text{sing}(\mathcal{F})}$ to
$\Omega^1_{X}(U- \text{sing}(\mathcal{F}))$,
extending the result using Hartog's, and taking the wedge product with $\omega$. The final result  is clearly independent of the choice of the lift.

\subsection{Quasi-invariant divisors} We will say that an irreducible hypersurface $H$ is quasi-invariant by a codimension one foliation
$\cal F$ if $H$ is not  $\cal F$ invariant and the restriction of $\cal F$ to $H$ is algebraically integrable. The subgroup of $\Div(X)$ generated
by the irreducible hypersurfaces quasi-invariant by $\cal F$ will be denoted by $\QInv(\cal F)$.

For a foliation $\cal F$  on a surface $S$ the concept is not very useful. According to our definition, any divisor in
 $\Div(S)$ is a member of $\Inv(\cal F) + \QInv(\cal F)$.
Anyway, these foliations provide a natural source of examples of quasi-invariant divisors.

\begin{proposition}
Let $X$ be  a projective manifold  of dimension at least three which   admits a dominant rational
map $p : X \dashrightarrow S$ to a smooth projective surface
$S$. If $\cal G$ is a foliation on $S$ and we set $\cal F = p^* \cal G$ then $p^* \Div(S) \subset \Inv(\cal F) + \QInv(\cal F)$.
\end{proposition}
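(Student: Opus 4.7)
The plan is to reduce to the case of a single irreducible divisor $D$ on $S$ and show that every irreducible component $H$ of $p^{*}D$ lies in $\Inv(\cal F) \cup \QInv(\cal F)$; the result then follows by linearity. Since the indeterminacy locus of $p$ has codimension at least two, the map $p|_{H}$ is well-defined at a general point of any hypersurface $H \subset X$, so one obtains an irreducible closed set $\overline{p(H)} \subseteq S$. For $H$ a component of $p^{*}D$ this image is contained in $D$, which forces a dichotomy: either $H$ \emph{dominates} $D$ (i.e., $\overline{p(H)} = D$) or $H$ is $p$-\emph{contracted} to a point of $D$ (i.e., $\overline{p(H)}$ is a single point of $D$).

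In the dominating case the key observation is that the leaf of $\cal F$ through a general point $x \in H$ is (a component of) $p^{-1}(L)$, where $L$ is the leaf of $\cal G$ through $p(x)$. If $D$ is $\cal G$-invariant then $L \subseteq D$, so this leaf lies in $p^{-1}(D)$, and a local analysis at a smooth general point of $H$ identifies the branch of $p^{-1}(D)$ through $x$ with $H$; this gives $H \in \Inv(\cal F)$. If $D$ is not $\cal G$-invariant then $L \cap D$ is finite, so the leaves of $\cal F|_{H}$ are components of fibers of the dominant rational map $p|_{H} : H \dashrightarrow D$, which are algebraic and of the expected dimension $n-2 = \dim \cal F|_{H}$. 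Hence $\cal F|_{H}$ is algebraically integrable; because $L \not\subseteq D$, $H$ is not invariant, and so $H \in \QInv(\cal F)$.

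For the contracted case, where $p(H) = \{q\} \subseteq D$, the plan is to show $H \in \Inv(\cal F)$ by verifying tangency at a general smooth point $x \in H$. Since $p|_{H}$ is constant near such $x$, one has $T_{x}H \subseteq \ker Dp_{x}$; and since $\cal F = p^{*}\cal G$, the inclusion $\ker Dp \subseteq T_{\cal F}$ holds on the open locus where $p$ is a submersion and $\cal G$ is regular, and extends globally because $T_{\cal F}$ is saturated (the quotient $T_X/T_{\cal F}$ being torsion-free). The step I expect to require the most care is precisely this contracted case: one must first recognise that such components can indeed occur in $p^{*}D$ (the pullback $p^{*}f$ of a local equation $f$ of $D$ at $q$ vanishes on $H$, so $H$ appears with positive multiplicity), and then check that the saturation built into the definition of $\cal F = p^{*}\cal G$ does not destroy the inclusion $\ker Dp \subseteq T_{\cal F}$. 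Combining the three cases and extending by linearity yields $p^{*}\Div(S) \subseteq \Inv(\cal F) + \QInv(\cal F)$.
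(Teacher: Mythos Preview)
Your treatment of the components that dominate $D$ is correct and matches the paper's argument. The genuine gap is in the contracted case: your claim that a contracted component $H$ must lie in $\Inv(\cal F)$ is false, and the argument breaks exactly where you feared it might.

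The sheaf inclusion $\ker Dp \subseteq T_{\cal F}$ does hold globally (your torsion-free argument is fine), but it does \emph{not} yield the fiberwise inclusion $\ker Dp_x \subseteq T_{\cal F,x}$ at points where the rank of $Dp$ drops---which is precisely what happens along a contracted $H$. Here is an explicit counterexample. Let $\cal G$ be the radial foliation on $S=\mathbb P^2$ defined near $q$ by $\eta=u\,dv-v\,du$, let $\sigma:\tilde S\to S$ be the blowup at $q$ with exceptional curve $E$, set $X=\tilde S\times\mathbb P^1$, and let $p=\sigma\circ\mathrm{pr}_1$. In the chart $(u,t,w)$ with $v=ut$ one computes $p^*\eta=u^2\,dt$, so $\cal F$ is defined by $dt$. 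The hypersurface $H=E\times\mathbb P^1=\{u=0\}$ is a component of $p^*D$ for $D=\{u=0\}$ and is contracted to $q$. But $dt\wedge du$ does not vanish on $\{u=0\}$, so $H$ is \emph{not} $\cal F$-invariant; it is quasi-invariant, foliated by the $\mathbb P^1$ factor. In this chart $\ker Dp_x=\{(0,b,c)\}=T_xH$ while $T_{\cal F,x}=\{(a,0,c)\}$, so the pointwise inclusion you need fails outright, even though the sheaf-theoretic kernel (which is spanned by $\partial_w$) does sit inside $T_{\cal F}$.

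The paper handles this case differently: one blows up $S$ at the image point (finitely many times) until the lifted rational map sends $H$ dominantly onto some exceptional curve $E'$ on the modified surface $S'$; since $\cal F$ is equally the pull-back of the transformed foliation $\cal G'$ on $S'$, one is back in the dominating case with $D$ replaced by $E'$. Because $E'$ may or may not be $\cal G'$-invariant (it is not, for instance, when $q$ is a dicritical singularity of $\cal G$ as above), the conclusion is only $H\in\Inv(\cal F)\cup\QInv(\cal F)$, which is exactly what is required.
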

\begin{proof} Let $C$ be an irreducible curve on $S$. The pull-back of $C$ consists in a finite number
of irreducible hypersurfaces. The ones which dominate $C$ are either $\cal F$-invariant (when $C$ is $\cal G$ invariant)  or are not $\cal F$-invariant but
foliated by fibers of $p$. Hence the irreducible hypersurfaces dominating $C$ are either $\cal F$-invariant or quasi-invariant as wanted.
If a hypersurface is contracted to a point then  after blowing-up sufficiently many
times its image, we reduce to the previous situation.
\end{proof}

\section{Flat divisors}
The proof of Theorem \ref{THM:quasiJouanolou} will be presented in Section \ref{S:proof} and is based on
properties of flat $S^1$-divisors (in the sense of \cite{Pereira06}) and their associated foliations.
In this section we recall several results from \cite[\S 2, \S3]{Pereira06} relating to $S^1$-flat divisors, and
adapt/extend some of them to our current goals.

\subsection{Definition}
Let $X$ be a complex manifold.
A divisor $D$ is called $S^1$-flat if $\cal O_X(D)$ is in the
image of the natural morphism
\[
    H^1(X, S^1) \rightarrow H^1(X, \cal O_X^*) = \Pic(X)
\]
induced by the inclusion $S^1 \to \cal O_X^*$, where $S^1$ denotes the constant sheaf of complex numbers of modulus $1$.

\begin{lemma}
Let $X$ be a compact K\"ahler manifold.  A line bundle $L$ on $X$
is  $S^1$-flat if and only if $c(L) = 0 \in H^2(X,\mathbb C)$.
\end{lemma}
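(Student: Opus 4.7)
The plan is to compare the long exact cohomology sequences of the two short exact sequences
\[
0 \to \bb{Z} \to \bb{R} \to S^1 \to 0 \quad \text{and} \quad 0 \to \bb{Z} \to \cal{O}_X \to \cal{O}_X^* \to 0,
\]
which fit into a commutative diagram of sheaves with the identity on $\bb{Z}$ and with the natural inclusions $\bb{R} \hookrightarrow \cal{O}_X$ (real constants into holomorphic functions) and $S^1 \hookrightarrow \cal{O}_X^*$. Naturality of the connecting homomorphism produces a commutative square: for $\alpha \in H^1(X, S^1)$ with image $L = \phi(\alpha) \in \Pic(X)$, the Chern class $c_1(L) \in H^2(X, \bb{Z})$ coincides with the image $\partial(\alpha)$ under the connecting map $\partial : H^1(X, S^1) \to H^2(X, \bb{Z})$ of the top sequence.

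For the forward direction, if $L$ is $S^1$-flat then $c_1(L) = \partial(\alpha)$ lies in the image of $\partial$. By exactness of the top long exact sequence, this image equals $\ker\bigl(H^2(X, \bb{Z}) \to H^2(X, \bb{R})\bigr)$, namely the torsion subgroup of $H^2(X, \bb{Z})$. In particular $c(L) = 0$ in $H^2(X, \bb{C})$.

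For the converse, assume $c(L) = 0$ in $H^2(X, \bb{C})$. Then $c_1(L)$ is torsion, hence maps to zero in $H^2(X, \bb{R})$, and by exactness lifts to some $\alpha \in H^1(X, S^1)$. Setting $L' = \phi(\alpha)$, we get $c_1(L \otimes (L')^{-1}) = 0$, so $L \otimes (L')^{-1} \in \Pic^0(X) = H^1(X, \cal{O}_X) / H^1(X, \bb{Z})$. It is therefore enough to verify that every topologically trivial line bundle is $S^1$-flat, i.e.\ that $\Pic^0(X)$ is contained in the image of $\phi$.

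This last point is where the Kähler hypothesis is essential, and it is the main step of the argument. On a compact Kähler manifold, the Hodge decomposition $H^1(X, \bb{C}) = H^{1,0}(X) \oplus H^{0,1}(X)$ together with the fact that complex conjugation exchanges the two summands implies that the composition $H^1(X, \bb{R}) \hookrightarrow H^1(X, \bb{C}) \twoheadrightarrow H^{0,1}(X) = H^1(X, \cal{O}_X)$ is an isomorphism of real vector spaces. Quotienting by the integral lattice $H^1(X, \bb{Z})$ (modulo torsion) yields a surjection $H^1(X, \bb{R})/H^1(X, \bb{Z}) \twoheadrightarrow \Pic^0(X)$, whose source embeds into $H^1(X, S^1)$ via the top long exact sequence. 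Hence $L \otimes (L')^{-1}$ is $S^1$-flat, and so is $L$. Without the Kähler assumption this identification can fail and the inclusion $\operatorname{image}(\phi) \subseteq \Pic^0(X)$ need not be an equality, so this is precisely the place where the hypothesis is used.
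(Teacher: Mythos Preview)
The paper states this lemma without proof, treating it as a standard fact about compact K\"ahler manifolds. Your argument is the natural one and is correct: the comparison of the two exponential sequences via naturality of the connecting map handles the forward direction immediately, and for the converse you correctly isolate the Hodge-theoretic input---namely, that on a compact K\"ahler manifold the real cohomology $H^1(X,\bb R)$ surjects onto $H^1(X,\cal O_X)$ as real vector spaces, so that $\Pic^0(X)$ lies in the image of $H^1(X,S^1)$. There is nothing to add; this is exactly the proof one would supply if the paper had included one.
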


\subsection{Associated $1$-forms}\label{SS:forms}
If $D \neq 0$ is an $S^1$-flat divisor then there exists a
natural construction of a closed logarithmic $1$-form $\omega_D \in H^0(X, \Omega^1(\log D))$
with purely imaginary periods.  It goes as  follows. Choose a sufficiently fine open
covering $U_i$ of $X$ and meromorphic functions $f_i:U_i \dashrightarrow \bb P^1$ such
that $D\vert_{U_i} = (f_i)_0 - (f_i)_\infty$ and $f_i = t_{ij} f_j$ where $t_{ij}$ is
constant of modulus $1$. Define $\omega_D$ locally as the
logarithmic differential of $f_i$, i.e.
$$
    {\omega_D}\vert_{U_i} = \frac{df_i}{f_i}.
$$
If $X$ is compact then the $1$-form $\omega_D$ is defined unambiguously. In general, $\omega_D$ is well-defined up to the addition of an exact holomorphic differential.

Let $D \neq 0$ be a $S^1$-flat divisor on a complex manifold
and let $\omega_D$ be a logarithimic $1$-form associated to $D$ as in
\S \ref{SS:forms}.  Since $d\omega_D = 0$
this defines a codimension one foliation denoted
$\cal F_D$.

We record some facts about the foliation $\cal F_D$.

\begin{lemma}\label{importantfacts}
Let $D\neq 0$ be a $S^1$-flat divisor and let $\omega_D$ and $\cal F_D$ be as above.
Then the following assertions hold true.
\begin{enumerate}
\item The support of $D$  is $\cal F_D$-invariant.
\item The periods of $\omega_D$ are purely imaginary and so $F = \vert e^{\int \omega_D} \vert: X \rightarrow [0, \infty]$
is a well-defined non-constant continuous function.
\item The function $F$ provides a real first integral of $\cal F_D$, i.e.,
$F$ is constant on leaves of $\cal F_D$.
\end{enumerate}
\end{lemma}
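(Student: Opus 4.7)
The plan is to reduce everything to the explicit local formula $\omega_D|_{U_i} = df_i/f_i$ and use the fact that the transition constants $t_{ij}$ satisfy $|t_{ij}|=1$.

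For item (1), I would observe that on $U_i$ the form $\omega_D$ is a logarithmic $1$-form whose polar locus is the support of $D|_{U_i}$, with constant integral residues along each irreducible component. Multiplying $\omega_D$ by a local defining function $g$ of an irreducible component of the support of $D$ yields a holomorphic $1$-form whose restriction to $\{g=0\}$ is a non-zero constant times $dg$, which vanishes on the tangent space of $\{g=0\}$. Hence each component of the support of $D$ is $\cal F_D$-invariant.

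For item (2), the key observation is that $\omega_D+\overline{\omega_D}$ computed on $U_i$ equals $df_i/f_i + d\bar f_i/\bar f_i = d\log |f_i|^2$, so $\operatorname{Re}(\omega_D) = d\log|f_i|$ is locally exact. Because $f_i = t_{ij}f_j$ with $|t_{ij}|=1$, the local potentials $\log|f_i|$ agree on overlaps, producing a globally defined pluriharmonic primitive of $\operatorname{Re}(\omega_D)$ away from the support of $D$. Consequently every period of $\omega_D$ is purely imaginary and $F = |e^{\int \omega_D}|$ coincides locally with $|f_i|$ up to a positive multiplicative constant; this extends continuously to $X$ by setting $F=0$ on the zero part and $F=\infty$ on the pole part of $D$, giving a continuous map $F:X\to[0,\infty]$ which is non-constant as $D\neq 0$.

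For item (3), the cleanest route is to compute directly: on $U_i$ we have $F = c_i|f_i|$ for some constant $c_i > 0$, so away from the support of $D$ one has $dF = \frac{1}{2}F(\omega_D + \overline{\omega_D})$. Since the leaves of $\cal F_D$ are cut out by the vanishing of $\omega_D$ (and hence of its conjugate) on the smooth locus, the differential of $F$ vanishes along the leaves in the complement of the support of $D$, so $F$ is constant on leaves there. Continuity then extends this to all of $X-\sing(\cal F_D)$.

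I do not expect a genuine obstacle: each item is a direct translation of the cocycle condition $|t_{ij}|=1$ into an analytic statement. The most delicate point is a bookkeeping one, namely to check that $F$ extends continuously across the support of $D$ and that the leaves accumulating onto this support still inherit constancy of $F$; both are handled by the local model $F=c_i|f_i|$ and the fact that the support of $D$ is itself a union of leaves by item (1).
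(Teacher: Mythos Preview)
The paper does not actually prove this lemma: it is stated without proof as a fact recalled from \cite[\S 2, \S3]{Pereira06}. Your argument is correct and is precisely the standard one implicit in that reference---everything flows from the local model $\omega_D|_{U_i}=df_i/f_i$ together with $|t_{ij}|=1$, which makes $\operatorname{Re}(\omega_D)=d\log|f_i|$ globally exact on $X\setminus\operatorname{supp}(D)$ and hence forces the periods to be purely imaginary. One cosmetic point: in item (2) you might remark that the global constant can be normalized (or simply that $F$ is well-defined up to a positive scalar), but this has no bearing on the conclusions.
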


\subsection{Closure of leaves}
We start this section  with a simple observation.

\begin{lemma}\label{L:uncountable}
Let $\cal F_1$ and $\cal F_2$ be two codimension one foliations on a connected
complex manifold (not
necessarily compact) $X$.
Suppose that $\cal F_1$ and $\cal F_2$ have an uncountable number
of distinct leaves in common.  Then $\cal F_1 = \cal F_2$.
\end{lemma}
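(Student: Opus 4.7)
The plan is to argue by contrapositive: assuming $\cal F_1 \ne \cal F_2$, I will show that the set of leaves common to both foliations is at most countable. The strategy localizes to an analytic ``coincidence locus'' and then uses second countability to count its pieces.

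First, I would construct the locus where the two foliations agree. Choose any point $p\in X$ where both foliations are non-singular; in a small neighborhood $U$ pick local $1$-forms $\omega_1,\omega_2$ defining $\cal F_1, \cal F_2$. The condition $(\omega_1\wedge\omega_2)(q)=0$ is well defined independently of the choice of trivializations (it is the vanishing of a global section of $\Omega^2_X\otimes N_{\cal F_1}\otimes N_{\cal F_2}$, after extending over the codimension $\geq 2$ singular sets by Hartogs). Hence the set
\[
Z = \{ q \in X : \omega_1(q) \wedge \omega_2(q) = 0 \}
\]
is a well-defined closed analytic subset of $X$; and because $\cal F_1 \ne \cal F_2$, it is a \emph{proper} analytic subset, in particular of dimension $\le \dim X - 1$.

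Next, I would observe that every common leaf $L$ of $\cal F_1$ and $\cal F_2$ is forced to lie inside $Z$: at any non-singular point $q$ of $L$, the tangent space $T_qL$ has dimension $n-1$ and must lie in $\ker\omega_1(q)\cap\ker\omega_2(q)$, which forces $\omega_1\wedge\omega_2$ to vanish at $q$. Now $Z$ decomposes into irreducible components $\{V_\alpha\}$, and this family is at most countable since $X$ is second countable. A standard connectedness argument shows that each (connected, immersed) common leaf $L$ lies in a single component $V_\alpha$: the subsets $L\cap V_\alpha$ are closed in $L$, and near any point where $L$ meets the regular part of $V_\alpha$ they are also open (since $L$ and $V_\alpha$ then have matching dimension $n-1$), while $L$ cannot be fully contained in the singular locus $V_{\alpha,\text{sing}}$, which has strictly smaller dimension.

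Finally, I would count the leaves in a fixed irreducible component $V_\alpha$ of dimension $n-1$. Each common leaf $L\subset V_\alpha$ meets the smooth manifold $V_{\alpha,\text{reg}}$ in a non-empty open subset (it is an immersed submanifold of the same dimension as $V_{\alpha,\text{reg}}$), and two distinct leaves are disjoint, hence give disjoint non-empty open subsets of $V_{\alpha,\text{reg}}$. By second countability of $V_{\alpha,\text{reg}}$, at most countably many such leaves fit in $V_\alpha$. Summing over the countably many components $V_\alpha$ yields at most countably many common leaves, contradicting the hypothesis.

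The main subtle point, in my view, is the global definition of $Z$: local $1$-forms $\omega_i$ are only well defined up to scaling, and they do not a priori extend across the singular set of $\cal F_i$. I would handle this by first carrying out the argument on $X\setminus(\sing{F_1}\cup\sing{F_2})$ (where $\omega_1\wedge\omega_2$ is defined as a twisted $2$-form) and then invoking the codimension $\ge 2$ of the singular sets, together with Hartogs-type extension, to close up $Z$ in $X$; alternatively one can simply argue leaf by leaf and exploit that the singular sets cannot contain an entire $(n-1)$-dimensional leaf.
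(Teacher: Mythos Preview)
Your proof is correct and follows essentially the same approach as the paper: both arguments hinge on the tangency locus $Z=\{\omega_1\wedge\omega_2=0\}$ being analytic and on the observation that a proper analytic subset cannot contain uncountably many disjoint codimension-one leaves. The paper phrases this directly (uncountably many common leaves accumulate, forcing the tangency locus to fill an open set), whereas you give the contrapositive with an explicit count via irreducible components and second countability; the underlying mechanism is the same.
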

\begin{proof}
    Let $L_\lambda$ be the set of common leaves of $\cal F_1$ and $\cal F_2$.
    By a simple counting argument we know that there must be some point $p \in X$ and an open set $p \in U \subset X$
    such that the $L_\lambda$ accumulate to some point in the interior of $U$.

    The tangency locus of $\cal F_1$ and $\cal F_2$ in $U$ contains  the smallest analytic set containing all the leaves $L_\lambda$, which is all of $U$.
    Thus $\cal F_1$ and $\cal F_2$ agree on a non-empty open subset and so must be equal on all of $X$.
\end{proof}

Proposition \ref{P:noncompactversion} below is a version of \cite[Proposition 5.2]{Pereira06} in the non-compact case.
The argument is essentially the same, but for convenience we include it here.

\begin{proposition}\label{P:noncompactversion}
Let $X$ be a complex manifold (not necessarily compact)
and let $D$ be an  $S^1$-flat divisor on $X$.
Let $\cal F$ be a codimension one foliation on $X$
and suppose $\cal F$ admits a leaf $L$ such
that
\begin{enumerate}
     \item the smallest complex analytic subset of $X$ containing $L$ is $X$ itself; and
     \item the topological closure $\overline L$ of $L$ is disjoint from the support of $D$, i.e. $\overline{L}\cap \text{supp}(D) = \emptyset$.
\end{enumerate}
Then the foliations $\cal F$ and $\cal F_D$ coincide.
\end{proposition}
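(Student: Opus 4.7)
The plan is to prove that $\sigma := \omega_D|_L$ vanishes identically as a (closed) holomorphic $1$-form on $L$. Once this is done, $TL = \TF$ sits inside $\ker \omega_D = T_{\cal F_D}$ along $L$; since both have the same codimension, $T\cal F = T\cal F_D$ at every point of $L$. The holomorphic twisted $2$-form $\omega \wedge \omega_D$ (where $\omega$ locally defines $\cal F$) then vanishes on $L$, and hypothesis (1) propagates this vanishing to $X$; so $\cal F = \cal F_D$.

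To show $\sigma = 0$, I exploit the real first integral $F$ of $\cal F_D$ furnished by Lemma \ref{importantfacts}. Hypothesis (2) makes $\omega_D$ holomorphic and $F$ continuous and strictly positive on a neighborhood of $\overline L$; in particular $\log F$ is pluriharmonic there, and $u := \log F|_L$ is a pluriharmonic function on $L$ with $du = \mathrm{Re}(\sigma)$. A straightforward local coordinate computation shows that $\mathrm{Re}(\sigma) \equiv 0$ forces $\sigma \equiv 0$ for any holomorphic $1$-form $\sigma$, so the task reduces to showing that $u$ is constant on the connected manifold $L$. By the maximum principle for pluriharmonic functions, this in turn reduces to exhibiting a point of $L$ at which $u$ attains its supremum over $\overline L$.

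The main obstacle is handling the case in which the supremum $M := \sup_{\overline L} \log F$ is attained only at boundary points $p \in \overline L \setminus L$ (for $M$ to exist and be attained one uses that $\overline L$ is compact, as it is in the situations of interest, for instance when $X$ is compact). At such a $p$, Lemma \ref{importantfacts}(3) shows that $\log F$ equals $M$ along the entire $\cal F_D$-leaf $L_D(p)$; by a transversal application of the one-variable maximum principle to the pluriharmonic function $\log F$ on small holomorphic disks centered at points of $L_D(p)$, one checks that $\log F$ is constantly $M$ on an open neighborhood $U$ of $L_D(p)$. Since $p \in \overline L \cap U$, the leaf $L$ meets $U$, so $u$ attains $M$ at a genuine point of $L$, reducing to the previous case. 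Putting this together with the reduction in the second paragraph yields $\sigma = 0$ and completes the proof.
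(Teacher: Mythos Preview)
Your reduction in the first two paragraphs is correct: showing $\sigma = \omega_D|_L \equiv 0$ is equivalent to showing that the pluriharmonic function $u = \log F|_L$ is constant, and once this is established the tangency argument together with hypothesis (1) finishes the proof, exactly as in the paper's case (i).

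The gap is in your third paragraph. You claim that $\log F$ is constantly equal to $M$ on an open neighbourhood $U$ of $L_D(p)$ in $X$, by the maximum principle on transversal disks. But $M = \sup_{\overline L}\log F$, while the points of a disk through $q \in L_D(p)$ transversal to $\cal F_D$ need not lie in $\overline L$; there is no reason $M$ should be a maximum of $\log F$ on that disk, so the maximum principle gives nothing. In fact the conclusion you assert is outright false: if $\log F$ were constant on a nonempty open subset of $X \setminus \text{supp}(D)$, then $\mathrm{Re}(\omega_D)$ and hence the holomorphic $1$-form $\omega_D$ would vanish there, and by analytic continuation $\omega_D \equiv 0$ on $X$, contradicting $D \neq 0$.

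The paper avoids this obstacle by not trying to force $F|_L$ to be constant. It allows the possibility that $F|_L$ is open, passes to the cover $\pi:\widetilde X \to X$ on which $f = e^{\int \omega_D}$ is single-valued, and observes that $\tilde f(\widetilde L) \subset \bb C^*$ is open and relatively compact, hence has uncountable boundary. For each boundary point one applies the maximum principle to the \emph{holomorphic} function $\tilde f$ restricted to the $\pi^*\cal F$-leaf through that point (which lies in $\pi^{-1}(\overline L)$), forcing $\tilde f$ to be constant on that leaf; this exhibits uncountably many common leaves of $\cal F$ and $\cal F_D$, and Lemma \ref{L:uncountable} finishes. The essential correction to your approach is that the maximum principle must be applied on $\cal F$-leaves contained in $\overline L$, where $M$ genuinely is an upper bound, not on arbitrary disks in $X$.
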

\begin{proof}
Without loss of generality by \cite[Lemma 5.1]{Pereira06} we may assume that $D = D_+ - D_-$ where $D_+, D_- \geq 0$
and $D_+ \cap D_- = \emptyset$.

The foliation $\cal F_D$ is defined by a closed logarithmic $1$-form $\omega_D$ with a polar set supported on $D$ and by Lemma \ref{importantfacts} admits a real first integral $F:X \rightarrow [0, \infty]$ such that $F^{-1}(0) = \text{supp}(D_+)$ and $F^{-1}(\infty) = \text{supp}(D_-)$. Consider the restriction of $F$ to $L$.  Since $F$ is the modulus of a holomorphic function we have two cases: $F\vert_L$ is either (i) constant or is (ii) open.

In case (i) $F\vert_L$ is constant,  $L$ is a leaf of both $\cal F$ and $\cal F_D$. So, the tangency locus of $\cal F$ and $\cal F_D$ must contain the smallest complex analytic subset of $X$ containing $L$.  This, however, is $X$ itself and hence $\cal F = \cal F_D$.

Suppose that we are in case (ii), i.e., $F\vert_L$ is an open map.
Since $F\vert_L$ is open and $L$ is bounded away from $D$ we know that there exists positive real numbers $m < M$ such that  $F(L)$ is the interval $(m, M) \subset \mathbb R$.

Recall that $F$ can be locally written as
$$F = \vert e^{\int \omega_D} \vert$$
and that all the periods of $\omega_D$ are purely imaginary complex numbers. Let $f$ denote the multivalued function $e^{\int \omega_D}$
and let $\pi:\widetilde{X} \rightarrow X$
be the cover associated to $f$.  Write $\widetilde{L} = \pi^{-1}(L)$
and let $\widetilde{\cal F}$ be the pulled back foliation.
If $\tilde{f}$ is the lift of $f$ we have that $$\tilde{f}:\tilde{X} \rightarrow \bb P^1$$ is holomorphic.

We know that $\tilde{f}(\widetilde{L}) \subset \bb C^*$ is open and relatively compact. Thus  $\partial(\tilde{f}(\widetilde{L}))$ is (uncountably) infinite. Pick $p \in \pi^{-1}(\partial L)\cap \tilde{f}^{-1}(\partial(\tilde{f}(\widetilde{L})))$
and let $\tilde{L}_p$ be the leaf of $\widetilde{\cal F}$ through $p$.
By the maximum principle we see that $\tilde{f}$ must be constant on $\tilde{L}_p$,
and therefore $\pi(\tilde{L}_p)$ is a leaf of both $\cal F$ and $\cal F_D$.

Since the monodromy group associated to $f$ is countably generated, this implies
that $\cal F$ and $\cal F_D$ have uncountably many leaves in common. Lemma \ref{L:uncountable} implies that $\cal F$ and $\cal F_D$
are equal.
\end{proof}

\subsection{Subfoliations defined by almost invariant divisors}

The following Proposition is the  main technical  point in the proof of Theorem \ref{T:quasiJouanolou}/Theorem \ref{THM:quasiJouanolou}.

\begin{proposition}\label{P:maintechnicalstep}
Let $D_1$ and $D_2$ be two $S^1$-flat
divisors on a compact complex threefold $X$
and let $L\xrightarrow{j}X$ be an immersion of a smooth complex surface.
Suppose
\begin{enumerate}
    \item\label{I:1} the connected components of $j^{-1}(\text{supp}(D_i))$ are compact for $i=1,2$; and
    \item\label{I:2} there exists some connected component $E$ of $j^{-1}(\text{supp}(D_1))$ such that
        $E\cap j^{-1}(\text{supp}(D_2)) = \emptyset$; and
    \item\label{I:3} if we write $D_1 = D_{1, +} - D_{1, -}$ where $D_{1, +}, D_{1, -} \geq 0$ are effective divisors,
        then $D_{1, +}\cap D_{1, -}\cap L  = \emptyset$.
\end{enumerate}
Let $\cal G$ be the restriction of $\cal F_{D_1}$ to $L$. Then there exists a neighborhood of $E$, saturated by $\cal G$, and filled up with
$\cal G$-invariant compact curves.
\end{proposition}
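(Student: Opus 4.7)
The plan is to produce, after passing to a finite cover, a proper single-valued holomorphic first integral for $\cal G$ on a small neighborhood of $E$ in $L$ whose zero locus is $E$; the saturated neighborhood filled with compact $\cal G$-invariant curves will then arise from the fibers of this first integral.

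First, by conditions (1) and (2), I would choose an open neighborhood $N$ of $E$ in $L$ with compact closure such that $N\cap j^{-1}(\text{supp}(D_1))=E$, $N\cap j^{-1}(\text{supp}(D_2))=\emptyset$, and $N$ deformation retracts onto $E$. By condition (3), after possibly replacing $D_1$ by $-D_1$, one may assume $E\subset j^{-1}(\text{supp}(D_{1,+}))$. Set $\eta:=j^*\omega_{D_1}$: a closed logarithmic $1$-form on $N$ with polar locus exactly $E$ and integer residues along its components, defining $\cal G$ on $N$. The multivalued holomorphic function $f:=\exp\int\eta$ then satisfies $|f|=F_1\circ j$, so that $f$ vanishes precisely on $E$. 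Passing to the covering $\pi:\widetilde N\to N$ that trivializes the monodromy of $f$, the lift $\tilde f:\widetilde N\to\mathbb{C}$ becomes a single-valued holomorphic first integral of $\pi^*\cal G$ with $\tilde f^{-1}(0)=\pi^{-1}(E)$.

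The crux is to show that $\pi$ restricts to a \emph{finite} cover over a suitable shrinking of $N$. The integer residues of $\eta$ give trivial monodromy along small loops around the components of $E$, so the monodromy of $f$ factors through $\pi_1(N)\simeq\pi_1(E)$; by Lemma~\ref{importantfacts}(2) it takes values in $S^1$. Hypothesis (2) is the tool that should force this $S^1$-valued representation to have finite image: since $N$ avoids $\text{supp}(D_2)$, the closed holomorphic $1$-form $j^*\omega_{D_2}$ is defined throughout $N$ with purely imaginary periods, and pairing its $S^1$-flat cocycle data against that of $\eta$ along loops in $E$ should yield torsion constraints on the periods.

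Once this finiteness is in hand, $\pi^{-1}(E)$ is compact, $\tilde f$ restricts to a proper holomorphic map onto a small disc $\Delta\subset\mathbb{C}$, and its nonzero fibers are compact complex curves that decompose into leaves of $\pi^*\cal G$. Projecting down by $\pi$ yields the desired neighborhood of $E$ in $N$, saturated by $\cal G$ and filled with compact $\cal G$-invariant curves. The main obstacle will be precisely the finiteness of the $S^1$-valued monodromy in the preceding paragraph: this is the only step where condition (2) is genuinely used, while the remaining steps are formal consequences of the $S^1$-flat formalism developed in Section~\ref{S:Jouanolou}.
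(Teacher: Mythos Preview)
Your approach has a genuine gap at precisely the step you flag as the ``main obstacle'': the claim that the $S^1$-valued monodromy of $f=\exp\int\eta$ along $\pi_1(E)$ is finite. The mechanism you propose --- ``pairing'' the cocycle data of $j^*\omega_{D_2}$ against that of $\eta$ --- does not yield any constraint. Since $N$ avoids $\mathrm{supp}(D_2)$, the restriction $j^*\omega_{D_2}|_N$ is a closed \emph{holomorphic} $1$-form with purely imaginary periods; on the compact curve $E$ any holomorphic $1$-form with purely imaginary periods is identically zero (its real part is harmonic and exact, hence vanishes). Thus $j^*\omega_{D_2}|_E=0$ and you learn nothing about the periods of $\eta$ along loops in $E$. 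In fact the monodromy need not be finite: if $E$ has positive genus there is no reason the periods of $\eta$ over $H_1(E,\mathbb Z)$ should lie in $2\pi i\,\mathbb Q$, so your plan to produce a finite cover and a proper holomorphic first integral breaks down.

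The paper's argument uses $D_2$ in an entirely different way. It does not attempt to prove finiteness of any monodromy. Instead it runs a dichotomy on the saturated neighborhood $U$: either every leaf of $\cal G$ in $U$ is already compact (and one is done), or some leaf $\ell_p$ has $\overline{\ell_p}\smallsetminus\ell_p$ infinite. In the second case, since $\overline{\ell_p}\cap j^{-1}(\mathrm{supp}(D_2))=\emptyset$, one applies Proposition~\ref{P:noncompactversion} on $U$ to conclude that $\cal G$ coincides with the restriction of $\cal F_{D_2}$ to $L$. Then $j^*\omega_{D_1}=H\cdot j^*\omega_{D_2}$ for a meromorphic $H$ which is non-constant by hypothesis~(\ref{I:2}); differentiating shows $H$ is a first integral for $\cal G$, and hypothesis~(\ref{I:3}) makes it holomorphic near $E$. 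So the role of $D_2$ is to supply a \emph{second} foliation that $\cal G$ is forced to equal whenever leaves fail to be compact, not to constrain the monodromy of the first.
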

\begin{proof}
Let $E$ be as in assumption (\ref{I:2}). According to assumption (\ref{I:1}),  $E$ is a compact $\cal G$-invariant curve.

Let $F$ be the real  first integral of $\cal F_{D_1}$ given by Lemma \ref{importantfacts},
which by restriction to $L$ gives a real  first integral of $\cal G$.  Assumption (\ref{I:3}) guarantees the existence
of an open neighborhood $U$ of $E$ in $L$ which is saturated by $\cal G$, i.e. any leaf of $\cal F_{D_1}$ which
meets $U$ is in fact contained in $U$.
Moreover, shrinking $U$ even further we
may assume that $U\cap j^{-1}(D_2) = \emptyset$.

For $p \in U$, let $\ell_p$ denote the leaf of $\cal G$ through $p$
and let $\overline{\ell_p}$ denote the topological closure
of $\ell_p$ in $U$.

If $\overline{\ell_p} = \ell_p$ (up to the addition of finitely many singular points of $\mathcal G$)  for all $p$ then we are done.
Otherwise, there is some $p$ such that the cardinality of $\overline{\ell_p} - \ell_p$ is infinite.
Since $\overline{\ell_p} \cap j^{-1}(D_2) = \emptyset$ we may apply Proposition \ref{P:noncompactversion}
to conclude that $\cal G = \cal H$ where $\cal H$ is the restriction
of $\cal F_{D_2}$ to $L$.

If $\omega_i$ is the closed logarithmic $1$-form defining $\cal F_{D_i}$ we have
that $\omega_1 = H \omega_2$ for some meromorphic function $H$ on $L$. Assumption (\ref{I:2}) guarantees
the non-constancy of $H$. Taking $d$ of both sides shows that $dH \wedge \omega_2 = 0$ and
so $\cal G$ has a meromorphic first integral.
However, since $D_{i, +}\cap D_{i, -} \cap L = \emptyset$ we see that this first
integral may be taken to be holomorphic. The restriction of the level sets of $H$ to
$U$ fills up the neighbourhood with $\cal G$-invariant compact curves.
\end{proof}

\section{Proof of Theorem \ref{THM:quasiJouanolou}}\label{S:proof}

\subsection{General complete intersection subvarities}


We recall a definition from \cite{AD14} which is based on \cite[Section 2.3]{LPT11}.

\begin{defn}
\label{D:algebraicpart}
Let $\cal F$ be a holomorphic foliation on a normal variety $X$. Then there exists
a normal variety $Y$ (unique up to birational equivalence),
a dominant rational map with connected fibres $\phi: X \dashrightarrow Y$ and a holomorphic
foliation $\cal G$ on $Y$ such that the following holds
\begin{enumerate}
\item $\cal G$ is purely transcendental, i.e., through a general point of $Y$ there is no positive
dimensional variety tangent to $\cal G$, and

\item $\cal F = \phi^*\cal G$.
\end{enumerate}
The foliation induced by $\phi$ is called the {\bf algebraic part} of $\cal F$
and is denoted $\cal F^{alg}$.
\end{defn}

For a projective manifold $X$, we say that a subvariety $V \subset X$ is general complete intersection
if $V$ is obtained by embedding $X$ in a projective space by means of a complete linear system defined by sufficiently high multiple of an ample line bundle
and intersecting the result with sufficiently general hyperplane sections. In particular, if $V$ is a general complete intersection in
$X$ of  dimension at least three then Lefschetz-type results \cite[Exposé XII, Corollaire 3.6]{MR2171939} guarantee that $V$ and $X$ have isomorphic
Picard groups.

\begin{lemma}
Let $\cal F$ be a codimension one foliation on a projective variety $X$
and let $\phi:X \dashrightarrow Y$ be the rational map corresponding
to $\cal F^{alg}$. Let $V \subset X$ be a general complete intersection variety
and let $\cal F_{|V}$ be the restricted foliation.
Then $(\cal F_{|V})^{alg}$ is induced by $\phi\vert_D:D \dashrightarrow \phi(D) \subset Y$.
\end{lemma}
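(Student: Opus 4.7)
The plan is to establish the equality of two foliations on $V$: the algebraic part $(\cal F_{|V})^{alg}$ and the foliation $\cal H$ on $V$ whose general leaves are the connected components of general fibers of $\phi|_V$. Writing $\cal F = \phi^*\cal G$ with $\cal G$ purely transcendental on $Y$ in accordance with Definition \ref{D:algebraicpart}, I would proceed by double inclusion.

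The inclusion $\cal H \subseteq (\cal F_{|V})^{alg}$ is the straightforward direction. The leaves of $\cal H$ sit inside fibers of $\phi$ intersected with $V$; since fibers of $\phi$ are by definition tangent to $\cal F$, their intersections with $V$ are algebraic subvarieties of $V$ tangent to $\cal F_{|V}$. The universal property of the algebraic part (applied to $\cal F_{|V}$) then forces $\cal H$ to factor through $(\cal F_{|V})^{alg}$.

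For the reverse inclusion, the key is to pin down the maximal algebraic subvariety of $V$ tangent to $\cal F_{|V}$ through a general point $v \in V$. If $W \subseteq V$ is such a subvariety, then from $T_W \subset T_{\cal F_{|V}} = \TF \cap T_V \subset \TF$ we see that $W$ is also algebraic in $X$ and tangent to $\cal F$. Since $V$ is a general complete intersection, $v$ is a general point of $X$, in particular outside the indeterminacy locus of $\phi$, and $\phi(v)$ avoids the countable union of proper closed subvarieties of $Y$ through which some positive-dimensional algebraic subvariety tangent to $\cal G$ passes. By the chain rule, $\phi(W)$ is algebraic and tangent to $\cal G$ through $\phi(v)$; the purely transcendental property of $\cal G$ then forces $\phi(W) = \{\phi(v)\}$. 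Consequently $W \subseteq \phi^{-1}(\phi(v)) \cap V$, which is precisely a fiber of $\phi|_V$, so $W$ lies in a leaf of $\cal H$.

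The main obstacle is making the generality arguments watertight. Two subtleties require attention: first, one must verify that the general point of a general complete intersection $V$ is general enough in $X$ that its image under $\phi$ avoids the countable ``bad'' locus in $Y$, which follows from moving $V$ in its linear system so that the family of such $V$ sweeps out $X$; second, in the degenerate case $\dim V < \dim Y$ the map $\phi|_V$ is generically finite and $\cal H$ is trivial, so one needs the same purely transcendental argument to rule out positive-dimensional algebraic leaves of $(\cal F_{|V})^{alg}$ — this still goes through because $\phi(V)$ is itself a general subvariety of $Y$ for general $V$. A minor technical point is that $\cal H$ should be interpreted through the Stein factorization of $\phi|_V$, so that its general leaves are honestly connected and match the format of Definition \ref{D:algebraicpart}.
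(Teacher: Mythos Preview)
Your argument is correct and is essentially the same as the paper's, just unpacked. The paper's proof simply observes that if $W$ is the normalization of $\phi(V)$ then, because $V$ is general, the restricted foliation $\cal G_{|W}$ is still purely transcendental; since $\cal F_{|V} = (\phi|_V)^*\cal G_{|W}$, the map $\phi|_V$ satisfies both conditions of Definition~\ref{D:algebraicpart} and is therefore (by the uniqueness clause there) the map giving $(\cal F_{|V})^{alg}$. Your double-inclusion argument is exactly what underlies the bare assertion ``$\cal G_{|W}$ is purely transcendental'': your reverse inclusion, pushing a putative algebraic $W\subset V$ tangent to $\cal F_{|V}$ forward by $\phi$ and using that $\phi(v)$ is general in $Y$, is precisely the verification of that claim, and your remark about Stein factorization matches the paper's passage to the normalization of $\phi(V)$.
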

\begin{proof}
Let $W$ be the normalization of $\phi(D)$ and let $\cal G_{|W}$ be the restricted
foliation. Since $V$ is a general complete intersection variety we see
that $\cal G_{|W}$ is purely transcendental.

Thus the map $\phi\vert_D:V \dashrightarrow W$ satisfies both conditions
of Definition \ref{D:algebraicpart}.
\end{proof}

\subsection{Reduction of singularities}\label{S:Cano}
Recall from \cite[page 910]{Cano} that a foliation $\cal F$ has dicritical singularities if there exists
a sequence of blow-ups with $\cal F$ invariant centers such that an
irreducible component of the exceptional divisor is not invariant by the resulting
foliation.

If $\cal F$ is a codimension one foliation on  a threefold $X$ then
\cite{Cano} shows the existence of a  sequence of blowups in foliation invariant centres
$$\pi:(Y, \cal G) \rightarrow (X, \cal F)$$
so that $\cal G$ does not have dicritical singularities.  
Such a resolution is not known in higher dimensions.

If a foliation $\cal F$ does not have dicritical singularities
then the group $\Inv(\cal F)  + \QInv(\cal F)$ behaves rather well
with respect to birational morphisms.

\begin{lemma}\label{L:Inv+QInv}
Let $\cal F$ be a foliation on a projective manifold $X$ of dimension three. Let $\pi : Y \to X$ be a birational
morphism from another projective manifold $Y$ to $X$. If $\cal F$ does not have dicritical singularities
then every fibre of $\pi$ is tangent to $\pi^*\cal F$.  In particular,
$\pi^* (\Inv(\cal F) + \QInv(\cal F)) \subset \Inv(\pi^* \cal F) + \QInv(\pi^* \cal F)$.
\end{lemma}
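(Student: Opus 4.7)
The plan is two-step: first show every fibre of $\pi$ is tangent to $\pi^*\cal F$, and then derive the divisor inclusion from this. For fibre-tangency, I would factor $\pi=\sigma_N\circ\cdots\circ\sigma_1$ as a sequence of blowups with smooth centres and argue inductively, so that it suffices to treat a single blowup $\sigma\colon X'\to X$ with smooth centre $Z$. The chain rule always yields $(\sigma^*\omega)(v)=\omega(d\sigma(v))=0$ for any $v$ tangent to a $\sigma$-fibre, so the pulled-back $1$-form vanishes on fibre directions. The only subtlety is that $\sigma^*\cal F$ is defined by the saturation $\eta=\sigma^*\omega/h$ of $\sigma^*\omega$, and dividing by the common factor $h$ could in principle kill the tangency on its zero locus.

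The two cases to handle are then: (i) if $Z$ is not $\cal F$-invariant, a direct local computation (using a normal form for $\omega$ at a smooth point of $\cal F$, or along a curve transverse to $\cal F$) shows that $\sigma^*\omega$ acquires no common factor along the new exceptional divisor $E$, so the chain-rule vanishing descends unchanged to $\eta$ and every fibre of $\sigma$ in $E$ is tangent; (ii) if $Z$ is $\cal F$-invariant, then by the definition of non-dicriticality applied to $\cal F$, the divisor $E$ is itself $\sigma^*\cal F$-invariant and so every fibre inside $E$ is tangent to $\sigma^*\cal F$.

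For the divisor inclusion, decompose $\pi^*D=\widetilde D+\sum m_i E_i$ for a prime $D\in\Inv(\cal F)+\QInv(\cal F)$. The strict transform $\widetilde D$ inherits the relevant property from $D$ via the birationality of $\pi|_{\widetilde D}$, using that algebraic integrability is a birational notion. For each exceptional $E_i$ with $m_i>0$ one has $\pi(E_i)\subseteq\supp(D)$: if $\pi(E_i)$ is a point, then $E_i$ is a $2$-dimensional fibre of $\pi$, and fibre-tangency forces $E_i\in\Inv(\pi^*\cal F)$ (a $2$-dimensional subvariety tangent to a codimension one foliation on a threefold is automatically invariant); if $\pi(E_i)$ is a curve $C$, the $1$-dimensional fibres of $\pi|_{E_i}\colon E_i\to C$ are tangent to the $1$-dimensional foliation $\pi^*\cal F|_{E_i}$ and must coincide generically with its leaves, which are therefore algebraic, so $E_i\in\Inv(\pi^*\cal F)+\QInv(\pi^*\cal F)$.

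The main obstacle I expect is the propagation of non-dicriticality through the induction: at each intermediate step the preceding foliation $\cal F_{i-1}$ must still be non-dicritical for case (ii) above to apply. For blowups along invariant centres the preservation is essentially built into the notion of non-dicriticality (and is implicit in the resolution work cited in Section \ref{S:Cano}), but for blowups along non-invariant centres in a general factorization of $\pi$ one must verify by a routine local computation that no new dicritical singularities are introduced along the new exceptional divisor.
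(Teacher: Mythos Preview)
Your handling of the divisor inclusion (the ``In particular'' statement) is essentially the same as the paper's, just phrased with a slightly finer case analysis; no issue there.

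The problem is in the first claim. Your opening move, factoring $\pi=\sigma_N\circ\cdots\circ\sigma_1$ into blowups along smooth centres, is precisely the strong factorization conjecture for birational morphisms, which is open in dimension $\ge 3$. Without it your induction never starts. Even if one grants the factorization, the obstacle you flag at the end is genuine and not ``routine'': non-dicriticality is defined via \emph{all} sequences of blowups in invariant centres, so after you blow up a non-invariant centre you are on a new foliated variety whose non-dicriticality you have no direct control over; this is exactly what you would need for your case~(ii) to go through at the next stage.

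The paper avoids both difficulties by arguing by contradiction instead of by factorization. If some fibre over a point $P$ were not tangent, one produces (after further blowups) a non-invariant exceptional divisor $E$ over $P$. Since $\cal F$ is smooth outside $\mathrm{sing}(\cal F)$ one gets $P\in\mathrm{sing}(\cal F)$; blowing up $P$ then gives an invariant exceptional divisor by \cite{CM92}, and the centre $Z_0$ of $E$ on this blowup is forced to lie again in the singular locus of the transformed foliation. Iterating, one constructs a tower of blowups \emph{only in invariant centres} which by \cite[Lemma~2.45]{KM98} eventually extracts $E$; non-dicriticality then forces $E$ to be invariant, a contradiction. The key point is that by tracking the centre of $E$ one never leaves the class of invariant-centre blowups, so the original non-dicriticality hypothesis applies directly at every step and no propagation problem arises.
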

\begin{proof}
To prove our first claim suppose for sake of contradiction that there exists some $P \in X$ and a component of $\pi^{-1}(P)$
which is not tangent to the foliation.  Perhaps blowing up more we may assume that this component
is a non-invariant divisor $E$. Observe that $E$ being non-invariant implies
$P \in \text{sing}(\cal F)$.  Let $\pi_0:Y_0 \rightarrow X$ be the blow up at $P$
and let $Z_0$ be the centre of $E$ on $Y_0$.  By \cite[Th\'eorem\`e 4]{CM92} $\text{exc}(\pi_0)$ is $\pi_0^*\cal F$ invariant
and so $Z_0 \subset \text{sing}(\pi_0^*\cal F)$.  Blowing up in $Z_0$ and continuing inductively
we get a sequence of blow ups extracting divisors invariant under the foliation.  However, by \cite[Lemma 2.45]{KM98}
this process will eventually extract $E$, a contradiction.

To prove our second claim it suffices to show that irreducible components of the exceptional divisor of $\pi$
are either invariant or quasi-invariant.  Let $D$ be one such component.  If $D$ is non-invariant the
fibres of $D \rightarrow \pi(D)$ are tangent to $(\pi^*\cal F)_{|D}$, in particular
$(\pi^*\cal F)_{|D}$ is algebraically integrable and so $D$ is quasi-invariant.
\end{proof}

Another advantage of non-dicritical singularities can be seen in the following proposition.

\begin{proposition}\label{P:nondicritical is good}
Let $X$ be a smooth threefold and let $\cal F$ be a codimension one foliation with non-dicritical
singularities. Let $C \subset X$ be a projective curve tangent to $\cal F$. Then there exists
a germ of a $\cal F$-invariant surface $S$ containing $C$.
\end{proposition}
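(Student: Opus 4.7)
The statement is local along the compact curve $C$: it suffices to produce for each $p \in C$ a germ of an $\cal F$-invariant surface at $p$ containing the germ $(C,p)$, and then glue these germs along $C$. For $p \in C \setminus \sing{F}$ the foliation is locally a smooth holomorphic fibration, and tangency of $C$ forces $C$ into a single fiber, which is the desired invariant surface germ. The substantive case is the finitely many points of $C \cap \sing{F}$, so I would fix such a point $p$ and argue locally at $p$.

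The plan is to pass to Cano's non-dicritical reduction of singularities $\pi : (Y, \cal G) \to (X, \cal F)$ recalled in Section \ref{S:Cano}, construct a $\cal G$-invariant surface germ in $Y$ containing the strict transform $\widetilde C$ of $C$ near $\pi^{-1}(p)$, and then push it forward via $\pi$. By the non-dicriticity hypothesis every irreducible component of the exceptional divisor is $\cal G$-invariant, so $\pi^{-1}(p)$ is a normal crossings union of $\cal G$-invariant surfaces; and by construction $\cal G$ has only simple (final) singularities. The strict transform $\widetilde C$ is tangent to $\cal G$ and meets $\pi^{-1}(p)$ in finitely many points $q_1,\dots,q_n$.

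At each $q_i \notin \sing{G}$ the local leaf of $\cal G$ is the unique analytic surface germ tangent to $\cal G$ at $q_i$, and the same first-integral argument as in the smooth case forces $\widetilde C$ into it locally. At each $q_i \in \sing{G}$ the foliation $\cal G$ is a simple singularity in the sense of Cano, and one must verify that $\widetilde C$ is contained in one of the local separatrices; in the normal-form coordinates the multi-valued first integral $\prod_j x_j^{\lambda_j}$, respectively the formal separatrix in the saddle-node case, forces any analytic curve passing through the singular point and tangent to $\cal G$ to lie in one of the finitely many invariant coordinate hyperplanes. This is the main technical step and the point where non-dicriticity is essentially used, as it is what guarantees the local invariant surfaces on the resolution in the first place. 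Gluing these invariant germs at the $q_i$ with the local leaves of $\cal G$ along the remainder of $\widetilde C$ produces a $\cal G$-invariant surface germ $\widetilde S$ along $\widetilde C$ in a neighborhood of $\pi^{-1}(p)$.

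Setting $S := \pi(\widetilde S)$, Remmert's proper mapping theorem shows that $S$ is closed analytic in a neighborhood of $p$; it is two-dimensional because $\pi$ is birational and $\widetilde S$ is two-dimensional, and it contains $C = \pi(\widetilde C)$. Invariance under $\cal F$ follows because, on the open set where $\pi$ is an isomorphism, $S$ and $\widetilde S$ coincide and $\widetilde S$ is $\cal G$-invariant, so the tangency relation extends by analytic continuation to the smooth part of $S$.
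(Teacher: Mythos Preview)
The paper's own proof is a one-line citation of \cite[Theorem IV.1.1]{CC92}; your sketch is essentially an outline of the strategy behind that result (using Cano's later reduction to simple singularities in place of the resolution constructed in \cite{CC92} itself), so the approaches agree.

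One point deserves more care. For the Remmert step you need $\widetilde S$ to be a closed analytic subset of a $\pi$-saturated open set, i.e.\ of $\pi^{-1}(V)$ for some neighborhood $V$ of $p$; but your $\widetilde S$ is only a germ along $\widetilde C$, and $\widetilde C$ meets the fiber $\pi^{-1}(p)$ in finitely many points, so $\pi$ restricted to a tubular neighborhood of $\widetilde C$ is not proper over $V$. The standard remedy is to adjoin to $\widetilde S$ the exceptional components lying over $p$ (all $\cal G$-invariant by non-dicriticity) so that the resulting set is closed in a genuine neighborhood of $\pi^{-1}(p)$; alternatively, once you know that the local leaf of $\cal G$ extends analytically across each $q_i$, you can push this down to conclude that the leaf of $\cal F$ through $C\setminus\{p\}$ is a closed analytic surface in $V\setminus\{p\}$ and invoke the Remmert--Stein extension theorem across the point $p$. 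Either way the gap is minor and the overall argument stands.
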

\begin{proof}
This follows from \cite[Theorem IV.1.1]{CC92}.
\end{proof}

Proposition \ref{P:nondicritical is good}  allows  to extend
immersions $j^{\circ} : L^{\circ} \to X - \mathrm{sing}(\cal F)$ of leaves of $\cal F_{|X - \mathrm{sing}(\cal F)}$ to
holomorphic maps $j: L \to X$ with the property that the pre-image of quasi-invariant divisors have compact connect components.

Notice that the non-dicritical assumption is necessary in Proposition \ref{P:nondicritical is good}  as the following example by
\cite{Jouanolou79} shows.

\begin{example}
The dicritical foliation defined on $\bb C^3$ by
\[
\omega = (x^my-z^{m+1})dx+(y^mz-x^{m+1})dy +(z^mx - y^{m+1})dz
\]
has no separatrices at the origin.  However, every line passing
through the origin is tangent to the foliation.
\end{example}

\subsection{Strenghtening of Theorem \ref{THM:quasiJouanolou}}\label{S:statement}
At this point we have all the concepts necessary to state the more precise version of Theorem \ref{THM:quasiJouanolou} mentioned in the Introduction.

\begin{theorem}\label{T:quasiJouanolou}
Let $\cal F$ be a codimension one foliation on a projective manifold $X$. If the singularities of  the restriction of $\cal F$ to a
three dimensional general complete intersection subvariety of $X$ are non-dicritical and
\[
    \dim  \QInv(\cal F)\otimes \bb C  \ge \dim \frac{NS(X)\otimes \mathbb C}{c(\Inv(\cal F)\otimes \bb C)} + 2
\]
then there exists a projective surface $S$, a dominant rational map $p: X \dashrightarrow S$, and a foliation $\cal G$ on $S$
such that $\cal F = p^* \cal G$.
\end{theorem}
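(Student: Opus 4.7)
The plan is to adapt the Darboux--Jouanolou dimension-counting argument used in Theorem \ref{T:Jouanolou} so as to produce $S^1$-flat divisors supported on quasi-invariant (and invariant) hypersurfaces, and then to feed these into Proposition \ref{P:maintechnicalstep} in order to exhibit a rank one algebraically integrable subfoliation of $\cal F$. First I would reduce to $\dim X = 3$ by restricting to a general complete intersection threefold $V \subset X$: the Lefschetz-type isomorphism $\Pic(V) \cong \Pic(X)$, together with the lemma relating $(\cal F|_V)^{alg}$ to $\cal F^{alg}$ (applied both to $\cal F$ on $X$ and to the foliations $\cal F|_H$ on each quasi-invariant $H$), identifies the subgroups $\Inv(\cal F)$ and $\QInv(\cal F)$ before and after restriction. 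The numerical hypothesis therefore descends to $V$, and it suffices to show that $\cal F|_V$ is pulled back from a surface.

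Working on $V$, the hypothesis implies that the kernel of the composition
\[
    \QInv(\cal F) \otimes \bb C \longrightarrow \frac{NS(V) \otimes \bb C}{c(\Inv(\cal F) \otimes \bb C)}
\]
has dimension at least two. Any element of this kernel becomes, after adjustment by a suitable invariant $\bb C$-divisor, a divisor with trivial Chern class, which on the projective (hence Kähler) manifold $V$ is $S^1$-flat. I would choose two linearly independent such divisors $D_1, D_2$ supported on $\Inv(\cal F) \cup \QInv(\cal F)$, and use the freedom in the two-dimensional family to arrange that the supports of $D_1$ and $D_2$ share no irreducible quasi-invariant component, and that on a general $\cal F$-leaf the positive and negative parts of each $D_i$ are disjoint. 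The associated foliations $\cal F_{D_i}$ of Lemma \ref{importantfacts} leave the quasi-invariant components of $D_i$ invariant, and so are distinct from $\cal F$; set $\cal H := \cal F \cap \cal F_{D_1}$, a rank one subfoliation of $\cal F$ on $V$.

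For a general leaf $L^\circ$ of $\cal F$ away from its singular set, Proposition \ref{P:nondicritical is good} extends the inclusion to a holomorphic map $j : L \to V$ from a smooth surface $L$ whose preimages of quasi-invariant hypersurfaces have compact connected components (each component sits inside the algebraic closure of a leaf of $\cal F|_H$ for some quasi-invariant $H$). All hypotheses of Proposition \ref{P:maintechnicalstep} are then satisfied, yielding a neighbourhood in $L$ of some compact component $E$ of $j^{-1}(\supp(D_1))$, saturated by $\cal G := \cal F_{D_1}|_L = \cal H|_L$, and filled with compact $\cal G$-invariant curves. Varying $L$ among $\cal F$-leaves produces compact $\cal H$-invariant curves through a Zariski-dense subset of $V$, so the rank one foliation $\cal H$ on $V$ is algebraically integrable. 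The quotient map $p : V \dashrightarrow S$ lands in a surface $S$, and since $\cal H \subset \cal F$ the foliation $\cal F|_V$ descends to a foliation $\cal G$ on $S$ with $\cal F|_V = p^* \cal G$; the conclusion on $X$ follows from the initial reduction.

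The main obstacle I anticipate is the verification of conditions (2) and (3) of Proposition \ref{P:maintechnicalstep} for a general $\cal F$-leaf $L$: the two-dimensional family of $S^1$-flat divisors must be used generically to separate irreducible quasi-invariant components between $D_1$ and $D_2$, and to ensure that the positive and negative parts of $D_1$ do not meet at a common point of $L$. This is precisely where the numerical bound in its full strength of $+2$ (rather than $+1$) is needed, and where the non-dicriticality hypothesis enters essentially through Proposition \ref{P:nondicritical is good} to guarantee the holomorphic extension $j$.
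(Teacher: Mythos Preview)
Your overall architecture matches the paper's: reduce to a threefold, produce two $S^1$-flat divisors in $\Inv(\cal F)+\QInv(\cal F)$ with distinct quasi-invariant components, pull them back to a general leaf $L$, and invoke Proposition~\ref{P:maintechnicalstep}. However, two of the steps you flag as ``obstacles'' are in fact genuine gaps that cannot be closed in the way you suggest.

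First, condition~(2) of Proposition~\ref{P:maintechnicalstep} requires that the connected component $E\subset j^{-1}(\supp D_1)$ be \emph{disjoint} from $j^{-1}(\supp D_2)$. Arranging that $D_1$ and $D_2$ have no common quasi-invariant component does not achieve this: two distinct quasi-invariant hypersurfaces $H_1,H_2$ will typically meet along a curve, and on a leaf through a point of $H_1\cap H_2$ the curves $j^{-1}(H_1)$ and $j^{-1}(H_2)$ will intersect. No genericity in the two-dimensional family of flat divisors prevents this. The paper does not avoid this case; it handles it by a \emph{dichotomy}. When $\supp(j^*D_2)$ does meet $E$, one picks a curve $F\subset\supp(j^*D_2)$ with $E\cdot F>0$, observes that $E$ is effective, nef, with $E^2=0$ (because $D_{1,+}$ and $D_{1,-}$ are disjoint and numerically equivalent), and then $kE+F$ for $k\gg 0$ is effective, nef, with positive self-intersection. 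An algebraicity criterion (Lemma~\ref{curvelemma}, via \cite[Theorem 6.7]{Hartshorne68}) then forces $L$ to be algebraic, so $\cal F$ is algebraically integrable. Only in the complementary case does Proposition~\ref{P:maintechnicalstep} apply. You are missing this entire branch and the lemma that resolves it.

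Second, two further steps are handled differently. Condition~(3) is obtained not by generic choice but by blowing up the scheme-theoretic intersection $D_{i,+}\cap D_{i,-}$ and resolving; Lemma~\ref{L:Inv+QInv} (which uses non-dicriticality) guarantees the pulled-back divisors remain in $\Inv+\QInv$. And the passage from ``a $\cal G$-saturated neighbourhood of $E$ in $L$ is filled by compact curves'' to a global subfoliation by algebraic curves is not a density argument as you wrote: Proposition~\ref{P:maintechnicalstep} only produces curves near $H_1$, not through a Zariski-dense set. The paper instead observes that $E$, being numerically trivial on $L$, is a multiple of a fibre of the local first integral and hence admits a non-trivial deformation inside $L$; then a Hilbert-scheme argument (Lemma~\ref{algebraicity}) shows that either $L$ is algebraic or the deformations sweep out a foliation in algebraic curves tangent to $\cal F$.
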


\subsection{Reduction to dimension three}

\begin{lemma}
\label{L:reductiontodim3}
Let $X$ be a projective manifold of dimension $n \geq 3$ and let $\cal F$ be a codimension one foliation
on $X$.  Let $V \subset X$ be a three dimensional general complete intersection and let $\cal F_{|V}$ be the restricted foliation.
Then
\begin{enumerate}
\item $\cal F_{|V}$ is algebraically integrable if and only if $\cal F$ is algebraically integrable and

\item $\cal F_{|V}$ is pulled back from a rational map to a surface if and only if $\cal F$ is pulled back from
a rational map to a surface.
\end{enumerate}
Thus, without loss of generality it suffices to prove Theorem \ref{T:quasiJouanolou} and Theorem \ref{THM:quasiJouanolou}
in the case where $X$ is a threefold.
\end{lemma}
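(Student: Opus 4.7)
My plan is to encode both conclusions as statements about the dimension $m = \dim Y$, where $\phi : X \dashrightarrow Y$ is the rational map defining the algebraic part $\cal F^{\mathrm{alg}}$ of $\cal F$, and to transfer them to $V$ via the preceding lemma, which identifies $(\cal F_{|V})^{\mathrm{alg}}$ with the foliation on $V$ induced by $\phi_{|V} : V \dashrightarrow \phi(V)$. Writing $m' = \dim \phi(V)$, the technical key is the identity $m' = \min(m, 3)$: since $V$ is a general complete intersection of dimension three in $X$ and the generic fibre of $\phi$ has dimension $n - m$ (with $n = \dim X$), a direct dimension count shows that the generic fibre of $\phi_{|V}$ has dimension $\max(0, 3 - m)$, giving the formula.

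For item~(1), I would invoke the observation that a codimension one foliation $\cal H$ is algebraically integrable iff $\cal H^{\mathrm{alg}} = \cal H$, equivalently iff the target of $\cal H^{\mathrm{alg}}$ has dimension equal to $\codim \cal H = 1$. Applied to $\cal F$ and to $\cal F_{|V}$, item~(1) becomes $m = 1 \iff m' = 1$, which is immediate from $m' = \min(m,3)$.

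For item~(2), I would prove the parallel characterization: $\cal H$ is a pull-back of a foliation on a surface under a dominant rational map iff the target of $\cal H^{\mathrm{alg}}$ has dimension in $\{1,2\}$. The sufficient direction is constructive: if this dimension is $2$, take $p = \phi$; if it is $1$, take $S = Y \times \bb{P}^1$, let $p = (\phi, g)$ for some auxiliary non-constant rational $g : X \dashrightarrow \bb{P}^1$, and let $\cal G$ be the foliation on $S$ by the vertical rulings. For the necessary direction, if $\cal F = p^*\cal G$ with $p : X \dashrightarrow S$ dominant to a surface, the fibres of $p$ are positive-dimensional algebraic subvarieties tangent to $\cal F$, hence to $\cal F^{\mathrm{alg}}$, and so contained in fibres of $\phi$; therefore $\phi$ is constant on fibres of $p$ and factors rationally as $\phi = \tilde q \circ p$ for some dominant $\tilde q : S \dashrightarrow Y$, forcing $m \le \dim S = 2$. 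Applying this characterization to both $\cal F$ and $\cal F_{|V}$ reduces item~(2) to $m \in \{1,2\} \iff m' \in \{1,2\}$, again immediate from $m' = \min(m,3)$. I expect the main obstacle to lie in this necessary direction, specifically in passing from the set-theoretic containment of fibres of $p$ in fibres of $\phi$ to an honest rational factorization $\phi = \tilde q \circ p$ with $\tilde q$ dominant; that is the only step that is not a routine dimension count.
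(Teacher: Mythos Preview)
Your argument for items (1) and (2) is correct and follows the same line as the paper's: both hinge on the preceding lemma identifying $(\mathcal F_{|V})^{\mathrm{alg}}$ with the foliation induced by $\phi_{|V}$, and both reduce to comparing $m=\dim Y$ with $m'=\dim \phi(V)$. The paper is simply terser---it asserts that the hypothesis on $\mathcal F_{|V}$ gives $m'=1$ (resp.\ $=2$) and that this forces $m=1$ (resp.\ $=2$) because $V$ is a general complete intersection of dimension three---whereas you spell out the formula $m'=\min(m,3)$ and the characterizations ``algebraically integrable $\Leftrightarrow m=1$'' and ``pull-back from a surface $\Leftrightarrow m\in\{1,2\}$''. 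Incidentally, the factorization step you flag as the main obstacle is unnecessary: the containment of fibres of $p$ in fibres of $\phi$ already gives $n-2\le n-m$, hence $m\le 2$, without ever constructing $\tilde q$.

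What your plan omits is the final assertion of the lemma: that the \emph{hypotheses} of Theorems~\ref{THM:quasiJouanolou} and~\ref{T:quasiJouanolou} transfer from $X$ to $V$. This does not follow from (1) and (2) alone. The paper checks it by observing that an irreducible hypersurface $D\subset X$ is $\mathcal F$-(quasi-)invariant if and only if $D_{|V}$ is $(\mathcal F_{|V})$-(quasi-)invariant, so that $\dim\QInv(\mathcal F_{|V})\otimes\mathbb C=\dim\QInv(\mathcal F)\otimes\mathbb C$; and, for the sharper Theorem~\ref{T:quasiJouanolou}, by invoking a Lefschetz-type isomorphism $NS(V)\cong NS(X)$ so that the quotient $NS\otimes\mathbb C\big/ c(\Inv\otimes\mathbb C)$ has the same dimension on $V$ as on $X$. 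Without this verification the reduction to threefolds is incomplete.
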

\begin{proof}
The if direction  of items (1) and (2) are clear.

To prove the other direction in both items,  let
$\phi:X \dashrightarrow Y$ be the fibration corresponding to $\cal F^{alg}$.
On one hand we know that $\phi\vert_V:V \dashrightarrow Y$ corresponds
to $(\cal F_{|V})^{alg}$ and so by assumption $\dim \phi(V) = 1$ (respectively $ =2$).
But since $V$ is a general complete intersection of dimension $3$ this forces $\dim Y =1$
(respectively $ = 2$).

To see our final claim, notice that $D$ is a $\cal F$-(quasi-)invariant divisor
if and only if $D\vert_V$ is $\cal G$-(quasi-)invariant.
This gives us
\[
\dim  \QInv(\cal F_V)\otimes \bb C = \dim  \QInv(\cal F)\otimes \bb C \, .
\]
Thus the assumptions of Theorem \ref{THM:quasiJouanolou} are also satisfied by $\cal F_{|V}$. Hence, to prove Theorem \ref{THM:quasiJouanolou}, it suffices to do it in dimension three.

We will now  verify the analogue claim for Theorem \ref{T:quasiJouanolou}. As we are assuming that $V$ is a general complete intersection of dimension three, Lefschetz-type results
guarantee that $NS(V) = NS(X)$. Consequently,
\[
\dim \frac{NS(V)\otimes \mathbb C}{c(\Inv(\cal F_V)\otimes \bb C)} =
\dim \frac{NS(X)\otimes \mathbb C}{c(\Inv(\cal F)\otimes \bb C)} .
\]

Thus, assuming Theorem \ref{T:quasiJouanolou} for threefolds,
\[
    \dim  \QInv(\cal F)\otimes \mathbb C  \ge \dim \frac{NS(X)\otimes \mathbb C}{c(\Inv(\cal F)\otimes C)} + 2
\]
implies that $\cal F_{|V}$ is either algebraically integrable or pulled back from a surface, in which
case we conclude the same is true of $\cal F$.
\end{proof}

\subsection{From non-trivial deformations to pull-backs}

\begin{lemma} \label{algebraicity}
Let $X$ be a smooth projective threefold
and let $\cal F$ be a codimension one foliation on $X$.
Let $C$ be a curve (not necessarily irreducible) each component
of which is tangent to $\cal F$.
Suppose that $\cal F$ is smooth near $C$, i.e. $C$ does not intersect the singular set of
$\cal F$.
Let $L$ be the germ of  leaf containing $C$.  Suppose that $C$
has a non-trivial deformation in $L$.
Then either
\begin{enumerate}
\item $L$ is algebraic (i.e., its Zariski closure is a surface); or
\item there is a foliation in algebraic curves tangent to $\cal F$.
\end{enumerate}
\end{lemma}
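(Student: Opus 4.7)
The plan is to use the Hilbert scheme of $X$ to upgrade the analytic deformation of $C$ inside $L$ to an algebraic family of compact curves in $X$, and then to split into cases according to the dimension of the swept-out locus. First, I would take the irreducible component $T \subset \operatorname{Hilb}(X)$ through $[C]$ inside the closed sub-locus parametrising compact curves tangent to $\cal F$. Since $X$ is projective this is an algebraic scheme, and since any non-trivial analytic deformation of $C$ inside the leaf $L$ consists of curves tangent to $\cal F$, the hypothesis witnesses $\dim T \geq 1$. Let $e \colon \cal C \to X$ denote the evaluation of the universal family over $T$, and set $Z = e(\cal C) \subset X$. Since the curves $C_t$ deform non-trivially and $X$ is three-dimensional, $\dim Z \in \{2, 3\}$.

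If $\dim Z = 2$, I would argue that $L$ is algebraic as follows. The positive-dimensional sub-family $T_L \subset T$ consisting of deformations of $C$ that stay inside $L$ sweeps out a $2$-dimensional analytic subset of $L$ contained in $Z$, so $\dim(L \cap Z) = 2$. Since $L$ is irreducible analytic of dimension two and $Z$ is algebraic of dimension two, this forces $L \subset Z$, whence the Zariski closure of $L$ is an irreducible $2$-dimensional component of $Z$. This gives conclusion $(1)$.

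If $\dim Z = 3$, then $Z = X$ and a dimension count forces $\dim T \geq 2$. Restricting to a general $2$-dimensional subvariety of $T$ through $[C]$, I may assume $\dim T = 2$, so that $e$ is generically finite. Applying Stein factorisation (or passing to a suitable birational resolution), I obtain a dominant rational map $X \dashrightarrow S$ to a surface whose general fiber is an algebraic curve from the family, hence tangent to $\cal F$. The fibration then defines a foliation by algebraic curves on $X$ tangent to $\cal F$, giving conclusion $(2)$.

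The main obstacle I expect is the second case: a $2$-parameter family of curves covering a threefold does not a priori arise from a fibration, because several curves of the family may pass through a general point (a ``web'' rather than a foliation). Turning this web into an honest single-valued foliation by curves will likely require a careful Stein-factorisation or Galois-closure argument, or else exploiting the constraint that all tangent directions lie in the $2$-dimensional plane $T_p \cal F$ to argue that the several tangents at a general point collapse onto one.
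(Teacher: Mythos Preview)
Your approach is essentially the same as the paper's: work inside the locus of the Hilbert scheme parametrising curves tangent to $\cal F$, look at the image of the universal family, and split on whether it is a divisor or all of $X$. The paper uses the Zariski closure $B$ of the image of the analytic disc $\Delta \to \operatorname{Hilb}(X)$ coming from the deformation, rather than the whole irreducible component, but this is cosmetic.

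The point you flag at the end is real, and the paper's own proof is just as terse there (it simply asserts that a two-parameter family of $\cal F$-tangent curves dominating $X$ ``gives our desired subfoliation by curves''). Your proposed fix via Stein factorisation, however, does not dissolve the web-versus-foliation issue: Stein-factoring $\cal C \to X$ yields a generically finite map $\cal C' \to X$ that need not be birational, so you still have several curves through a general point and no single-valued direction field. The correct way to close the gap is to invoke the \emph{algebraic part} of $\cal F$ (Definition~\ref{D:algebraicpart} in the paper): once a general point of $X$ lies on an algebraic curve tangent to $\cal F$, the foliation $\cal G$ on the target $Y$ of $\phi:X\dashrightarrow Y$ is purely transcendental only if $\dim Y\le 2$, so $\cal F^{alg}$ already has positive-dimensional leaves. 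This $\cal F^{alg}$ is, by construction, an honest foliation by algebraic curves tangent to $\cal F$ (or $\cal F$ itself, which is even better). No Stein factorisation or Galois-closure gymnastics are needed.
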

\begin{proof}
Let $W$ be the closed subscheme
of the Hilbert scheme parametrizing subvarieties tangent to $\cal F$ and
let $W_0$ be the connected component of $W$ containing $[C]$.
Let $\gamma:\Delta \rightarrow W_0$ be the map associated to
the deformation and let $B$ be the Zariski closure
of $\gamma(\Delta)$.
Let $\pi:U \rightarrow W_0$ be the universal family
with evaluation map $e:U \rightarrow X$.
Consider the restriction
$e:U_B = U\times_{W_0}B \rightarrow X$.

If $e:U_B \rightarrow X$ is dominant then if $S \subset B$
is a general hypersurface we have that $S$ parametrizes
a two dimensional family of curves tangent to $\cal F$ and dominating
$X$ which gives our desired subfoliation by curves.

Otherwise, $e:U_B \rightarrow X$ dominates some divisor $D$.
Let $p_t = \gamma(t)$ and let $\Gamma = \gamma(\Delta)$.
One one hand, we have $e(\pi^{-1}(p_t)) \subset D$, on the other, if
$V$ is some small neighborhood
of $C$ we see that $L\cap V \subset e(\pi^{-1}(\Gamma))$.
Hence, the Zariski closure of $L$ is contained in $D$.
\end{proof}

\subsection{Algebraicity criterion}

\begin{lemma}\label{curvelemma}
Let $L$ be a  leaf of a foliation on a projective $3$-fold $X$ containing
an effective and  nef divisor $\Sigma$ with compact and connected support. If $\Sigma^2>0$ then $L$ is algebraic.
\end{lemma}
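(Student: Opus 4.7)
The plan is to use $\Sigma^2>0$ to produce a non-trivial linear system of deformations of $m\Sigma$ inside $L$ for some $m \gg 0$, invoke Lemma \ref{algebraicity}, and then rule out its second alternative using the bigness of $\Sigma$.

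\textbf{Step 1 (deformations of $m\Sigma$).} Working on the formal completion $\hat L$ of $L$ along the support of $\Sigma$, I would analyse $\cal O(m\Sigma)$ via the standard exact sequences
\[
0 \to \cal O_\Sigma\bigl((m-k)\Sigma\bigr) \to \cal O_{\Sigma^{(k+1)}}(m\Sigma) \to \cal O_{\Sigma^{(k)}}(m\Sigma) \to 0
\]
relating successive infinitesimal neighborhoods, using $I_\Sigma/I_\Sigma^2 \cong \cal O(-\Sigma)\vert_\Sigma$. Riemann--Roch on the compact $1$-dimensional schemes $\Sigma^{(k)}$, combined with the nefness of $\Sigma$, yields
\[
\chi\bigl(\Sigma^{(m)},\cal O(m\Sigma)\bigr) = \tfrac{1}{2}\Sigma^2\, m^2 + O(m),
\]
so $h^0\bigl(\Sigma^{(m)},\cal O(m\Sigma)\bigr)$ grows at least quadratically in $m$ since $\Sigma^2>0$. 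A Grauert-type algebraization theorem, applicable because $\Sigma$ is effective, nef and big in the surface germ, lifts these formal sections to genuine holomorphic sections of $\cal O(m\Sigma)$ on an open neighborhood of $\Sigma$ in $L$. For $m \gg 0$ the linear system $|m\Sigma|$ has positive dimension, so $m\Sigma$ admits a non-trivial deformation inside $L$.

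\textbf{Step 2 (apply Lemma \ref{algebraicity}).} The divisor $C = m\Sigma$ is tangent to $\cal F$ (its support lies in the leaf $L$) and $\cal F$ is smooth along $C$, so Lemma \ref{algebraicity} yields one of two conclusions: either $L$ is algebraic --- the desired outcome --- or there is a foliation $\cal H$ by compact algebraic curves tangent to $\cal F$. In the latter case, $\cal H\vert_L$ realises $L$ as a proper fibration $\pi\colon L \to B$ over a connected $1$-dimensional base. Write $\Sigma = \Sigma_{\text{vert}} + \Sigma_{\text{hor}}$, where $\Sigma_{\text{vert}}$ is a sum of fibres and $\Sigma_{\text{hor}}$ is horizontal. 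Distinct fibres are disjoint and a general fibre has self-intersection zero, so $\Sigma_{\text{vert}}^2 = 0$; hence $\Sigma^2 > 0$ forces $\Sigma_{\text{hor}} \ne 0$. The image $\pi(\Sigma_{\text{hor}})$ is a compact $1$-dimensional subset of the connected $1$-dimensional variety $B$, so $B$ itself is compact, and consequently $L$ is compact too. As a compact analytic subset of the projective manifold $X$, the image of $L$ is then algebraic by Chow's theorem, so $L$ is algebraic. In either case $L$ is algebraic.

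\textbf{Main obstacle.} The delicate step is the algebraization at the end of Step 1: passing from the quadratic growth of Euler characteristic on infinitesimal thickenings of $\Sigma$ to genuine holomorphic sections on a Hausdorff neighborhood inside the non-compact surface $L$. This requires a convergence/algebraization result of Andreotti--Grauert / Hartshorne type tailored to nef and big compact divisors on complex surfaces. Step 2 is then a direct application of Lemma \ref{algebraicity} combined with standard compactness arguments.
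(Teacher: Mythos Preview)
Your overall instinct --- exploit the positivity of $\Sigma$ via a formal-neighborhood/algebraization argument --- is the right one, and is in fact what the paper does. But your execution has two genuine gaps, and the paper's route is both shorter and avoids them.

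\textbf{Gap in Step 1.} The algebraization you invoke needs the normal bundle $N_{\Sigma/L}=\cal O_\Sigma(\Sigma)$ to be ample, not merely nef and big. If some irreducible component $C\subset\supp(\Sigma)$ has $\Sigma\cdot C=0$, then $\cal O_\Sigma(\Sigma)$ has degree zero on $C$ and is not ample; the standard Grauert/Hartshorne theorems do not apply, and there is no off-the-shelf result producing convergent sections on a neighborhood in the non-compact surface $L$ from the formal data alone under a nef-and-big hypothesis. (Your growth estimate for $\chi$ also does not immediately control $h^0$ without bounding $h^1$ on the non-reduced thickenings.) The paper repairs exactly this: write $\Sigma=\Sigma_1+\Sigma_0$ according to whether $\Sigma\cdot C>0$ or $=0$ on components, replace $\Sigma$ by $\Sigma^{(1)}=(1+\epsilon)\Sigma_1+\Sigma_0$, and iterate. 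Connectedness of $\supp(\Sigma)$ forces every component into the ``$>0$'' part after finitely many steps, so a suitable integer multiple $\sigma=N\Sigma^{(m)}$ is Cartier with $\sigma\cdot C>0$ for every component, i.e.\ $N_{\sigma/L}$ is ample. Then \cite[Theorem 6.7]{Hartshorne68} applies directly and yields that the Zariski closure of $L$ in $X$ is a projective surface.

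\textbf{Gap in Step 2.} Lemma \ref{algebraicity} explicitly assumes that $\cal F$ is smooth near $C$; Lemma \ref{curvelemma} carries no such hypothesis, and after the separatrix-type extensions discussed around Proposition \ref{P:nondicritical is good} the leaf $L$ may well meet $\text{sing}(\cal F)$. So your appeal to Lemma \ref{algebraicity} is not justified. Your fallback argument in alternative (2) is also incomplete: asserting that $\cal H\vert_L$ gives a proper fibration $\pi\colon L\to B$ over a one-dimensional \emph{variety} $B$ presupposes exactly the kind of global structure on the non-algebraic surface $L$ that you are trying to establish.

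The paper's argument sidesteps both issues entirely: once $N_{\sigma/L}$ is ample, Hartshorne's theorem gives algebraicity of $L$ in one stroke, with no smoothness hypothesis on $\cal F$, no deformation theory, and no case analysis via Lemma \ref{algebraicity}.
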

\begin{proof}
Write $\Sigma = \Sigma_1 +\Sigma_0$ where $\Sigma\cdot C>0$ for all $C$ in the support of $\Sigma_1$
and $\Sigma\cdot C = 0$ for all $C$ in the support of $\Sigma_0$.
We define
\[\Sigma^{(1)} = (1+\epsilon)\Sigma_1+\Sigma_0\]
for some sufficiently small choice of $\epsilon >0$
and we define $\Sigma^{(1)}_1$ and $\Sigma^{(1)}_0$ in a similar manner.
For $\epsilon>0$ small we see that $\supp{\Sigma_1} \subset \supp{\Sigma_1^{(1)}}$
but also, if $C \subset \supp{\Sigma_0}$ and $C\cap\supp{\Sigma_1} \neq \emptyset$ then $C \subset \supp{\Sigma_1^{(1)}}$.
Continuing inductively, since $\Sigma$ is connected,  we eventually produce a divisor $\Sigma^{(m)}$ such
that $\Sigma^{(m)}_1 = \Sigma^{(m)}$.  For $N \in \bb N$ sufficiently divisble
\[\sigma = N\Sigma^{(m)}\]
is a Cartier divisor.
By construction $\sigma \cdot C >0$ for all irreducible $C \subset \supp{\sigma}$.
In particular, we see that $N_{\sigma/L}$ is ample.

Thus by \cite[Theorem 6.7]{Hartshorne68} the Zariski closure
of $L$ in $X$ is a projective surface.
\end{proof}

\subsection{Proof of Theorem \ref{T:quasiJouanolou}}
The assumption on the number of quasi-invariant hypersurfaces  allows
the construction of  two  $S^1$-flat divisors in $\Inv(\cal F) + \QInv(\cal F)$, say $D_1$ and $D_2$,
which have distinct quasi-invariant hypersurfaces in their respective supports.

We may assume, by Lemma \ref{L:reductiontodim3}, that $\cal F$ is a non-dicritical foliation on
a projective $3$-fold.

Let $D \in \Inv(\cal F) + \QInv(\cal F)$ and let $\pi:Y \to X$ be a birational morphism
from a projective manifold $Y$  to $X$.  As we are assuming that $\cal F$ has non-dicritical singularities,
then $\pi^* D \in \Inv(\pi^* \cal F) + \QInv(\pi^* \cal F)$ according to Lemma \ref{L:Inv+QInv}. Therefore, we may freely replace
$X$ and $\cal F$ by any resolution without interfering with the  existence of divisors $D_1$ and $D_2$ as above.

We may write $D_i = D_{i, +} - D_{i, -}$ where $D_{i, +}, D_{i, -} \geq 0$ are effective divisors.
Let $B_i$ be the scheme theoretic intersection of $D_{i, +}$ and $D_{i, -}$ and let $\pi:Y \rightarrow X$ be the blow up
in $B_1$ and $B_2$ followed by a resolution of singularities.
Let $p \in D_{i, +} \cap D_{i, -}$ and let $U$ be an open affine containing $p$ such that $D_{i, +}\cap U = (f_{i, +} = 0)$
and $D_{i, -}\cap U = (f_{i, -} = 0)$. This gives us a rational map $F = [f_{i, +}: f_{i, -}]:U \dashrightarrow \bb P^1$
such that $F^*0 - F^*\infty  = D_i\cap U$.
Observe that $\pi$ resolves the indeterminacy locus of $F$ and so if we write
$\pi^*D_i = A_i - B_i$ where $A_i, B_i \geq 0$ are effective then $A_i$ and $B_i$ have disjoint support.
Thus, replacing $X$, $\cal F$ and $D_i$ by $Y$, $\pi^*\cal F$ and $\pi^*D_i$
we may assume that $D_{i, +} \cap D_{i, -} = \emptyset$.

Let $H_1$ be a quasi-invariant invariant hypersurface contained in $\supp {D_1}$ and let 
$j:L \rightarrow X$ be a general leaf of $\cal F$ passing through a sufficiently general point 
of $H_1$. Let $E$ be a  connected component of the divisor $j^*(D_{1})$ containing a connected component
of $j^* H_1$. 

As $D_{1,+}$ and $D_{1,-}$ are disjoint effective divisors with the same Chern classes, we have that
$E$ or $-E$ is an effective divisor in $L$ with  zero self-intersection, i.e. $E^2=0$. Perhaps after replacing $E$ by $-E$, we 
can assume that $E$ is effective. Note also that any  curve $C$ in the support of $E$ satisfies $E \cdot C=0$. 
Hence $E$ is effective and  nef. 

Consider now the restriction of $D_2$ to $L$, i.e. $j^*D_2$. Our choice of $L$ (passing through a general point of $H_1 \varsubsetneq \supp(D_2)$) 
guarantees that the support of $j^*D_2$ does not contain the support of $E$. If the support of $j^*D_2$  intersects $E$, then we can pick a
curve $F$ in  $\supp(j^*D_2)$ such that $E \cdot F>0$. For $k\gg0$, the divisor $(kE + F)^2>0$ is nef, 
effective and has positive self-intersection. Lemma \ref{curvelemma} implies $L$ is algebraic. Since $L$ is general,
we deduce that $\cal F$ is algebraically integrable.

If instead the support of $j^*D_2$ does not intersect the support of
$E$ then the hypotheses of Proposition \ref{P:maintechnicalstep}  are satisfied. So,
 if $\cal G$ is the restriction of the  foliation $\cal F_{D_1}$
to $L$ then $\cal G$ has a saturated neighborhood
of $E$ filled up with invariant compact curves.

Since $c_1(j^*D_i) = 0$ this implies that in fact $E$ is a (multiple of a)
fibre of $f$ and so (a multiple of) $E$ must move in $L$.  Since $\cal F$ is smooth
near $E$ we may apply Lemma \ref{algebraicity} to conclude.
\qed

\subsection{Proof of Theorem \ref{THM:quasiJouanolou}}
We may assume, by Lemma \ref{L:reductiontodim3}, that $\dim X = 3$. Moreover,
it suffices to verify the conclusion on some resolution of $X$ and $\cal F$. So passing to a resolution
we may assume that $\cal F$ has simple singularities, in particular, they are non-dicritical, cf. Section \ref{S:Cano}. We can apply Theorem \ref{T:quasiJouanolou} to conclude.
\qed

\subsection{Remarks on the assumptions of Theorem \ref{THM:quasiJouanolou}}
It is possible that adaptations analogous to the ones described in Section
\ref{S:comparison}, might lead to a version of our result for arbitrary complex
manifolds. One obstruction to carry out our argument in general is the lack
of reduction of singularities for foliation on manifolds of dimension strictly greater
than $3$. In the projective case, the lack of such result is bypassed by considering the
restriction of the foliation to the general $3$-dimensional submanifold. Another difficulty
comes from the lack of properness for irreducible components of the spaces parametrizing the codimension two subvarieties
invariant by the foliation $\mathcal G$, used in the proof Lemma \ref{algebraicity}.

\section{Quasi-invariant divisors  in positive characteristic}

\subsection{Basic concepts}\label{S:charp review}
We recall some basic facts and definitions about foliations in characteristic $p$.

Let $X$ be a smooth variety over an algebraically closed field of characteristic $p$. For us, a foliation $\cal F$ on
$X$ is, as in characteristic zero, given by a saturated subsheaf $\TF$ of $T_X$ which is closed under
the Lie bracket.

Let $v \in T_X(U)$ be some local section
of the tangent sheaf.  An elementary computation shows that the $p$-th power of $v$
is still a vector field on $U$. Taking $p$-th powers of vector fields gives an $\cal O_X$-linear map
\[
F: \Frob^*\TF \rightarrow T_X/\TF \, ,
\]
where $\Frob : X \to X$ is the (absolute) Frobenius morphism.

\begin{defn}
Let $X$ be a normal variety over a field of characteristic $p$ and let $\cal F$ be a foliation
on $X$.  We say that $\cal F$ is   $p$-closed  provided $F: \Frob^*\TF \rightarrow T_X/\TF$
is the zero morphism.
\end{defn}

It is easy to see that $p$-closedness is a birational invariant and that if $\cal F$
is algebraically integrable then it is $p$-closed.

Unlike in characteristic zero, the Frobenius Theorem does not hold in positive characteristic.
The structure of the foliation depends on whether it is $p$-closed or not.
If a foliation is not $p$-closed then for a sufficiently general  closed point $x \in X$ there is no (formal) $\cal F$ invariant subvariety
through $x$. If instead the foliation is $p$-closed then through any closed point $x \in X$ there exists
infinitely many algebraic $\cal F$ invariant subvarieties  through $x$. For more details and references on the subject
we redirect the reader to \cite[Lecture III]{MR1468476} and \cite[Section 7]{LPT11}.

The dichotomy on the behaviour of $\cal F$ describe in the previous paragraph suggests the following adaptation of the
notion  of quasi-invariance to  positive characteristic.

\begin{defn}
\label{d:charpqinv1}
Let $X$ be a normal variety over an algebraically closed field of characteristic $p>0$ and let $\cal F$ be a codimension one foliation
on $X$. We say that a hypersurface $H$ is  quasi-invariant if it is not invariant and the foliation
restricted to $H$ is $p$-closed.
\end{defn}


\subsection{Proof of Theorem \ref{THM:charp}}
Denote by $D_1, D_2, ...$ the collection of quasi-invariant hypersurfaces.
Suppose that $\cal F$ is not $p$-closed, then the map $\Frob^*\TF \rightarrow T_X/\TF$
given by $v \mapsto v^p$ is generically surjective, and so the kernel $K$
is a rank $n-2$ reflexive subsheaf of $\Frob^*\cal F$ where $\text{dim}(X) = n$.
We claim that $K = \Frob^*\TG$ for a $p$-closed foliation $\cal G$ on $X$.

By \cite[Proposition 6.1]{PT13} there exists a saturated subsheaf $\TG\subset T_X$
such that $K = \Frob^*\TG$.

We first prove that $\cal G$ is a $p$-closed distribution.
It suffices check this locally in a Zariski neighborhood of a sufficiently general point $Q \in X$. In particular, we can assume
that each $D_i$ is cut out by a single equation $f_i = 0$ and that $\cal F$ is defined by a regular $1$-form
$\omega$.

After passing to a smaller Zariski neighborhood of $Q$ according to \cite[Lemma 6.1]{CLNLPT}
we may find regular vector fields $v_{i, 1}, ..., v_{i, n-1}$ such that
$[v_{i, j}, v_{i, k}]=0$ and
$v_{i, 1}, ..., v_{i, n-1}$ generate $\TF$ where $n = \dim(X)$.
Furthermore, we may choose $v_{i, 1}, ..., v_{i, n-2}$ so that
$v_{i, 1}, ..., v_{i, n-2}$ leave $D_i$ invariant and
so are tangent to $\cal F_{|D_i}$, the restriction of the foliation $\cal F$  to $D_i$.
We have then for $1 \leq j \leq n-2$ that $v_{i,j}^p$ is still tangent to $\cal F_{|D_i}$ and so
$\omega(v_{i, j}^p) \in (f_i)$.
We have two cases
\begin{enumerate}
\item $\omega(v_{i, n-1}^p) \in (f_i)$ and
\item $\omega(v_{i, n-1}^p) \notin (f_i)$.
\end{enumerate}

In case (1) we see that the map $\Frob^*\TF \rightarrow T_X/\TF$ vanishes along $D_i$,
and so if case (1) holds for infinitely many $D_i$ we have that $\TF$ is $p$-closed.

Thus, for all but finitely many $i$ we are in case (2).
Suppose that $\cal G$ is generated by vector fields $w_1, ..., w_{n-2}$. Then for all $i$ we may write
\[
w_k = \sum_{j = 1}^{n-1}a^k_{i, j}v_{i, j}
\]
with $a^k_{i, j} \in \cal O_{X, Q}$.

Observe that
\[
w_k^p = \sum (a^k_{i, j})^pv^p_{i, j} \mod \TF.
\]
Since $\omega(w_k^p) = 0$ for all $k$, $\omega(v^p_{i, j}) \in (f_i)$ for $1 \leq j \leq n-2$ and
$\omega(v^p_{i, n-1}) \notin (f_i)$, applying
$\omega$ to both sides gives us that
$(a^k_{i, n-1})^p \in (f_i)$, hence $a^k_{i, n-1} \in (f_i)$.
But this implies that for all $k,$ $w_k$ (and hence $w_k^p$) leaves $D_i$ invariant,
and so $\cal G$ leaves $D_i$ invariant. Since $w_k^p$
is tangent to $\cal F$ and leaves $D_i$ invariant,
we see that it is tangent to $\cal G$ along $D_i$.
Thus the tangency locus of $\cal G$ and $w_k^p$ consists of infinitely many
divisors and so $\cal G$ is $p$-closed.

We now claim that $\cal G$ is closed under Lie bracket.
Since $\cal G$ leaves $D_i$ invariant for all $i$ we see that $[w_j, w_k]$ leaves $D_i$ invariant
for all $i$. Thus $[w_j, w_k]$ is tangent to $\cal F$ and leaves $D_i$ invariant, and so is
tangent to $\cal G$ along $D_i$.  Thus the tangency locus of $\cal G$ and
$[w_j, w_k]$ consists of infinitely many
divisors and so $\cal G$ is closed under Lie bracket.

Since $\cal G$ is a $p$-closed foliation there exists a
purely inseparable morphism $\rho:X \rightarrow Y$ of degree $p^{n-2}$ such that
that $\ker d\rho = \TG$.

We therefore have an exact sequence
\[
\rho^*\Omega^1_Y \rightarrow \Omega^1_X \rightarrow \Omega^1_{\cal G}.
\]
Let $p:Y \dashrightarrow S$ be a general choice of a dominant rational map to a smooth surface
and let $U \subset Y$ and $V \subset S$ be two open sets such that $p_{|U}:U \rightarrow V$ is a morphism.
Since $p$ is general we may assume that $\rho^*p^*\Omega^1_V \rightarrow \rho^*\Omega^1_U$
is generically surjective.

Consider the dominant rational map $p \circ \rho:X \dashrightarrow S$.
Let $\overline{X}$ denote the closure of the graph of $p \circ \rho:\rho^{-1}(U) \rightarrow S$
in $X \times S$.  Then $\overline{X} \rightarrow X$ is birational (although $\overline{X}$ is
no longer smooth) and $p$ extends to a proper morphism $\overline{p}:\overline{X} \rightarrow S$.
Let $\overline{\cal F}$ and $\overline{\cal G}$
denote the transforms of $\cal F$ and $\cal G$ respectively.

Consider the Stein factorization of $\overline{p}$
\begin{center}
\begin{tikzcd}
\overline{X} \arrow{dr}{\overline{p}} \arrow{d}{\phi} & \\
T \arrow{r}{a} & S.
\end{tikzcd}
\end{center}
Let $\widetilde{T} \rightarrow T$ be a resolution of singularities (which exists since $T$ is a surface)
and let $\widetilde{X}$ be the normalization of the main component of $\overline{X}\times_T\widetilde{T}$.
Replacing $\overline{X}$ by $\widetilde{X}$ and $T$ by $\widetilde{T}$ we may freely assume that
$T$ is smooth.

Consider the composition of sheaf morphisms
\[
\Phi: \phi^*\Omega^1_T \rightarrow \Omega^1_{\overline{X}} \rightarrow \Omega^1_{\overline{{\cal F}}}.
\]
Since $\text{im}(\phi^*\Omega^1_T \rightarrow \Omega^1_{\overline{X}})$ is contained in
the kernel of $\Omega^{1}_{\overline{X}} \rightarrow \Omega^1_{\overline{\cal G}}$
we see that $\ker(\Phi)$ is a rank 1 reflexive subsheaf of $\phi^*\Omega^1_T$.
Pushing forward we have
$$0 \rightarrow \phi_*\ker(\Phi) \rightarrow \phi_*\phi^*\Omega^1_T = \Omega^1_T$$
where the latter equality holds since $\Omega^1_T$ is locally free and
$\phi_*\cal O_{\overline{X}} = \cal O_T$. The sheaf
$\phi_*\ker(\Phi)$ defines a foliation by curves $\cal L$ on $T$ such that $\phi^*\cal L = \overline{\cal F}$
and we are done.
\qed

\section{Cone of curves}\label{S:cone}

\subsection{Setup}
We will now consider consider codimension one foliations on
singular threefolds. As customary in birational geometry, we will assume that the threefold is
normal and that the canonical sheaf of the foliation is $\mathbb Q$-Cartier.

Given any birational morphism $\pi: \widetilde{X} \rightarrow X$, from a normal $3$-fold $\widetilde{X}$
we get an induced foliation $\widetilde{\cal F}$ on $\tilde{X}$.
Thus, we can write $$K_{\widetilde{\cal F}} =
\pi^*\KF+ \sum a(E_i, \cal F)E_i,$$
We say that $\cal F$ has canonical singularities  if
$a(E_i, \cal F) \geq 0$,  for every exceptional divisor $E_i$ of an arbitrary birational
morphism  $\pi: \widetilde{X} \rightarrow X$ from any normal projective variety $\widetilde{X}$ to $X$.

\subsection{Cone theorem for codimension one foliations}
The result below is proved in \cite[Theorem 1.1]{Spicer17} for foliated pairs. To avoid
an extra layer of definitions, we will stick to ``classical'' foliated varieties. The interested
reader will have no difficulties to extend Theorem \ref{THM:cone} for foliated pairs.

\begin{theorem}\label{T:cone}
Let $X$ be a projective $\bb Q$-factorial and klt projective variety of dimension three
and $\cal F$ a codimension one foliation with non-dicritical foliation singularities.
Suppose $\cal F$ has canonical singularities.
Then
$$\overline{NE}(X) = \overline{NE}(X)_{\KF \geq 0}+\sum \bb{R}_
+ [L_i]$$
where $L_i$ are curves. Furthermore, either
$L_i$ is contained in
$\text{sing}(X)$, or
$L_i$ may be taken to be a rational curve with
$\KF\cdot L_i \geq -6.$  In particular, the $\KF$-negative
extremal rays are locally discrete
in the $K_{\cal F}<0$ portion of the cone.
\end{theorem}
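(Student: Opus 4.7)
The plan is to mimic Mori's classical cone theorem on $X$ with $\KF$ playing the role of $K_X$. Two essential ingredients are needed: a Bogomolov--McQuillan style bend-and-break adapted to curves tangent to a codimension-one foliation, and a careful accounting of the interaction between the (terminal) singularities of $X$ and the (canonical, non-dicritical) singularities of $\cal F$. Convex geometry then assembles these into the global structural statement.

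First I would show that every $\KF$-negative extremal ray $R \subset \overline{NE}(X)$ not already represented by a curve inside $\text{sing}(X)$ admits a representative tangent to $\cal F$. Indeed, a purely transverse representative $C$ of $R$ would give, via the quotient $T_X \to \NF$, a generically globally generated line bundle on a suitable family of deformations of $C$; combined with Miyaoka-style generic semi-positivity this forces $\NF \cdot C \ge 0$, whence $K_X \cdot C = (\KF - \NF)\cdot C < 0$. The classical Mori cone theorem on $X$ then produces a contraction, whose fibres must be tangent to $\cal F$ because $\cal F$ is non-dicritical, yielding a tangent representative of $R$.

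With a tangent curve $C$ in hand, I would apply foliated bend-and-break: since $\KF \cdot C < 0$, deformations of $C$ inside the leaves of $\cal F$ can be broken into chains of rational curves. The canonical non-dicritical hypothesis ensures that the resulting rational pieces remain tangent to $\cal F$ (rather than being absorbed into dicritical exceptional data), and a dimension count on the rank-$2$ leaves, adjusted by the singularity correction arising from both $\cal F$ and $X$, gives the explicit bound $-6 \le \KF \cdot L_i < 0$. Extremal rays spanned only by curves in $\text{sing}(X)$ are collected without further work. The local discreteness statement then follows formally from the uniform lower bound together with pairing against an ample divisor.

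The hardest step, and the genuinely new difficulty compared with the classical cone theorem, is running the bend-and-break procedure in the foliated and singular setting: one must control the deformation theory of tangent curves as they pass through the terminal singularities of $X$ and the canonical non-dicritical singularities of $\cal F$, and verify that the numerical bound $-6$ survives the attaching and breaking operations. The remainder of the argument — the convex-geometric assembly of the cone and the discreteness of extremal rays — is, by comparison, routine once these geometric inputs are in place.
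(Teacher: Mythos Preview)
The paper does not prove this statement: Theorem~\ref{T:cone} is quoted verbatim from \cite[Theorem 1.1]{Spicer17} and is used here purely as a black box (together with the subsequent classification of extremal rays and \cite[Corollary 6.4, Corollary 11.2]{Spicer17}) in the proof of Theorem~\ref{THM:cone}. So there is no ``paper's own proof'' to compare against; your proposal is addressing a result that the present paper imports rather than establishes.

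That said, your sketch is broadly in the spirit of how such foliated cone theorems are proved --- a bend-and-break argument for curves tangent to $\cal F$, combined with control of the singularities to extract the numerical bound --- but several of the individual steps are not justified as written. In your first paragraph, the claim that a ``purely transverse'' representative forces $\NF\cdot C\ge 0$ via generic semi-positivity is not substantiated; there is no reason a curve spanning an extremal ray need be sufficiently free for Miyaoka-type positivity to apply, and the dichotomy ``tangent versus purely transverse'' is too coarse. The reduction to the classical cone theorem on $X$ and the assertion that fibres of the resulting contraction are tangent to $\cal F$ by non-dicriticality conflates several distinct phenomena. The actual argument in \cite{Spicer17} is considerably more delicate, passing through foliated log resolutions, a careful analysis of subadjunction along invariant centres, and a foliated version of the bend-and-break estimate; the bound $-6$ arises from specific numerics there, not from a generic ``dimension count on rank-$2$ leaves''. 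Your proposal identifies the right \emph{shape} of argument but does not supply the technical content, and in any case that content lies outside the present paper.
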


\subsection{Classification of extremal rays}
The extremal rays detected by Theorem \ref{T:cone} are of three different types according
to the dimension of
\[
\text{loc}(R) = \{x \in X : x \in C \text{ such that } [C] \in R\}\, ,
\]
the locus of points belonging to a curve $C$ with class spanning the extremal ray $R$.
In the terminology of \cite[Definition 23]{Spicer17}, a $\KF$-negative  extremal ray can be of
one of the following types.
\begin{enumerate}
\item Fiber type when $\dim \text{loc}(R)=3$. In this case, the foliation is the pull-back of a foliation on a surface
under a rational map with rational fibers.
\item Divisorial type when $\dim \text{loc}(R)=2$. The irreducible components of the support of the divisor spanned by the curves with class generating
the extremal ray $R$ are invariant or quasi-invariant by $\cal F$.

\item Flipping type when $\dim \text{loc}(R)=1$. In this case, we will say that the curves with class generating $R$ are flipping curves. Each
extremal ray of flipping type is represented by a finite number of curves.
\end{enumerate}

\begin{lemma}\label{L:intersects}
Set up as in Theorem \ref{T:cone}.
Let $C$ be an irreducible curve such that $[C] \in \overline{NE}(X)$ spans an extremal ray of flipping type.
Then either
$C\cap \text{sing}(X) \neq \emptyset$ or $C \subset \text{sing}(\cal F)$
\end{lemma}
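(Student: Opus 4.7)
The plan is to argue by contradiction: assume $C \cap \text{sing}(X) = \emptyset$ and $C \not\subset \text{sing}(\cal F)$, so that a general point of $C$ lies in the smooth locus of both $X$ and $\cal F$; the goal is to contradict the flipping-type condition $\dim \text{loc}(R) = 1$. The rough picture is that a curve lying in the smooth locus of $X$ and the generic smooth locus of $\cal F$ has too many deformations to be consistent with the rigidity imposed by flipping type.

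First I would argue that $C$ must be tangent to $\cal F$. If $C$ were transverse to $\cal F$ at a general point, then at such a point the natural map $\TF|_C \to N_{C/X}$ is an isomorphism; since both are rank $2$ bundles agreeing generically on the smooth curve $C$, this forces $\deg N_{C/X} \geq -\KF\cdot C > 0$. For $C \cong \bb P^1$ (and, more generally, via Riemann--Roch) this yields $h^0(N_{C/X}) > 0$, hence a non-trivial deformation of $C$ in $X$ preserving its numerical class, and so $\dim \text{loc}(R) \geq 2$, a contradiction. With $C$ tangent to $\cal F$, the non-dicritical hypothesis and Proposition \ref{P:nondicritical is good} produce a germ of $\cal F$-invariant smooth surface $S$ through $C$. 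The identity $\NF|_S = N_{S/X}$ (adjunction for invariant divisors) yields the foliated adjunction $\KF|_S = K_S$, so $K_S\cdot C = \KF\cdot C < 0$; adjunction on $S$ then gives $C^2_S = 2p_a(C) - 2 - \KF\cdot C \geq -1$.

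If $C^2_S \geq 0$ then $C$ admits non-trivial formal deformations inside $S$; since $X$ is projective these algebraize via the Hilbert scheme to an algebraic family of curves in the class of $C$, producing $\dim \text{loc}(R) \geq 2$ and the desired contradiction. The only remaining case is $C \cong \bb P^1$, $C^2_S = -1$ and $\KF\cdot C = -1$, where $C$ is a $(-1)$-curve in the leaf germ and rigid inside $S$. Examining the short exact sequence $0 \to N_{C/S} \to N_{C/X} \to N_{S/X}|_C \to 0$ with $N_{C/S} = \cal O(-1)$ and $N_{S/X}|_C = \cal O(\NF\cdot C)$ shows that $C$ still moves out of $S$ whenever $\NF\cdot C \geq 0$, which again gives the contradiction.

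The main obstacle is the delicate subcase $\NF\cdot C \leq -1$, where $C$ is actually rigid in $X$ (the normal bundle splits as $\cal O(-1) \oplus \cal O(\NF\cdot C)$, with non-positive $h^0$). To rule this out I plan to exploit the canonical-singularity assumption on $\cal F$: performing a carefully chosen sequence of foliated blow-ups starting with the blow-up of $C$ (whose exceptional divisor is invariant by non-dicriticality), followed by the blow-up of a suitable point of the resulting exceptional divisor lying on the strict transform of $S$, one should produce a divisor of strictly negative discrepancy with respect to $\cal F$, contradicting canonicality. Tracking the transforms of $S$ and of the local $1$-form defining $\cal F$ through this sequence, and pinning down the exact vanishing order needed to force negativity, is the step where I expect the bulk of the technical work to lie.
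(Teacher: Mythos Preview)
The paper's proof is quite different and much shorter. Assuming $C\cap\text{sing}(X)=\emptyset$, one works in a neighbourhood of $C$ and applies Cano's resolution \cite{Cano}: a sequence of blow-ups in $\cal F$-invariant centres brings the foliation to simple singularities, and because the centres are invariant this resolution is \emph{crepant}, i.e.\ $K_{\widetilde{\cal F}}=\pi^*K_{\cal F}$. The strict transform of $C$ therefore remains $K_{\widetilde{\cal F}}$-negative, and the conclusion $C\subset\text{sing}(\cal F)$ is then quoted directly from \cite[Corollary~11.2]{Spicer17}. The lemma in this paper is only the reduction step; the substantive content lives in the cited result from the foliated MMP.

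Your final subcase is a genuine gap, and the mechanism you propose cannot close it. Your standing hypothesis $C\not\subset\text{sing}(\cal F)$ permits the stronger situation $C\cap\text{sing}(\cal F)=\emptyset$: then $\cal F$ is smooth on a full neighbourhood of $C$, and $C$ sits in a smooth leaf $S$ with $N_{C/S}\cong\cal O_{\bb P^1}(-1)$ and $N_{S/X}\vert_C\cong\cal O_{\bb P^1}(-1)$ --- the $(-1,-1)$ configuration. But a smooth foliation automatically has terminal singularities, so \emph{no} sequence of blow-ups centred near $C$ can ever produce a divisor of negative $K_{\cal F}$-discrepancy. Concretely: blowing up $C$, the pulled-back defining $1$-form acquires no divisorial zero, so $N_{\widetilde{\cal F}}=\pi^*N_{\cal F}$ and $a(E,\cal F)=a(E,X)=+1$; subsequent point or curve blow-ups on $E$ (including at the newly created singular curve $E\cap\widetilde S$) keep all discrepancies strictly positive. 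The contradiction you hope to manufacture is simply not there. Excluding a $(-1,-1)$-curve in a smooth leaf from spanning a flipping ray is precisely what requires the global foliated-MMP input packaged in \cite[Corollary~11.2]{Spicer17}; it is not reachable by a local discrepancy calculation.

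A secondary issue: the identity $K_{\cal F}\vert_S=K_S$ holds only where $\cal F$ is smooth along $S$; in general there is an effective correction supported on $\text{sing}(\cal F)\cap S$, and Proposition~\ref{P:nondicritical is good} does not guarantee that the separatrix $S$ is smooth. So your inequality $C^2_S\ge -1$ also needs care at points where $C$ meets $\text{sing}(\cal F)$.
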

\begin{proof}
Suppose $C \cap \text{sing}(X) = \emptyset$. Shrinking $X$ to a neighborhood of $C$ by
\cite{Cano} we may perform a series of blow ups in foliation invariant centres
$\pi:(\widetilde{X}, \widetilde{\cal F}) \rightarrow (X, \cal F)$ so that $\widetilde{\cal F}$ has simple singularities.
However, since we only blow up in invariant centres $\pi$ must be crepant, i.e., $K_{\widetilde{\cal F}} = \pi^*K_{\cal F}$.
In particular the strict transform of $C$ is still $K_{\widetilde{\cal F}}$-negative in which case we may apply
\cite[Corollary 11.2]{Spicer17} to conclude that $C \subset \text{sing}(\cal F)$.
\end{proof}

We will also need the following easy observations.

\begin{lemma}
\label{L:2fibretypecontractions}
Set up as in Theorem \ref{T:cone}.
Suppose there are two $K_{\cal F}$-negative extremal rays $R_1$ and $R_2$ of fibre type.
Then $\cal F$ is algebraically integrable and the closure of every leaf of $\cal F$ is a rational surface.
\end{lemma}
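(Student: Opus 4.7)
The plan is to exploit the classification of fibre-type extremal rays recalled just before the statement: each $R_i$ supplies a dominant rational map $c_i : X \dashrightarrow S_i$ to a surface, a foliation $\cal G_i$ on $S_i$ with $\cal F = c_i^* \cal G_i$, and general fibres which are rational curves whose classes generate $R_i$.

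For the algebraic integrability, I would pick a general point $x \in X$, let $L$ be the leaf of $\cal F$ through $x$, and let $F_i \subset L$ denote the rational fibre of $c_i$ through $x$ (both fibres lie in $L$ because $c_i$-fibres are $\cal F$-tangent). Since $R_1 \neq R_2$, the curve $F_2$ is not contracted by $c_1$, hence $c_1(F_2)$ is an irreducible algebraic curve in $S_1$. It is contained in the $\cal G_1$-leaf $\ell$ through $c_1(x)$, since $c_1(L) \subset \ell$. At any smooth point of $c_1(F_2)$ off the singular locus of $\cal G_1$, uniqueness of the $\cal G_1$-integral curve through a regular point forces $c_1(F_2)$ and $\ell$ to coincide locally, so $c_1(F_2)$ is both open and closed in the connected $\ell$. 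This forces $\ell = c_1(F_2)$ to be algebraic, hence $\cal G_1$ is algebraically integrable, and therefore so is $\cal F = c_1^* \cal G_1$, with $\overline L$ being an irreducible component of $c_1^{-1}(c_1(F_2))$. By symmetry $\cal G_2$ is also algebraically integrable.

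To deduce rationality of $\overline L$, I would use that $\overline L$ carries two ruling structures coming from $c_i|_{\overline L} \colon \overline L \to c_i(\overline L)$: the generic fibres are $\mathbb P^1$'s in the classes $R_1$ and $R_2$ respectively, and these are distinct because a general $F_1$ meets a general $F_2$ at $x$. On the minimal desingularization $\widetilde L$, both morphisms $\widetilde L \to c_i(\overline L)$ still have generic fibre $\mathbb P^1$. If either base curve $c_i(\overline L)$ had genus $\geq 1$, every rational curve in $\widetilde L$ would necessarily lie in the fibres of that morphism, contradicting the existence of the second family of $\mathbb P^1$-fibres in a distinct class. Hence both base curves must be rational, so $\widetilde L$, and thus $\overline L$, is rational; applying the argument to a general point of an arbitrary leaf of $\cal F$ shows that every leaf closure is a rational surface.

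The main obstacle I anticipate is the transition from "a compact algebraic curve sits inside the leaf" to "the leaf itself is algebraic" — this rests on the uniqueness of the integral curve of $\cal G_1$ through a regular point combined with the openness/closedness dichotomy in the connected one-dimensional $\ell$. Once algebraic integrability is in hand, the rationality conclusion is a routine two-rulings argument.
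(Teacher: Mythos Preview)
Your argument is correct and follows essentially the same route as the paper: push a general fibre $F_2$ of the second contraction through $c_1$ to obtain an algebraic curve tangent to $\cal G_1$ through a general point of $S_1$, forcing $\cal G_1$ (hence $\cal F$) to be algebraically integrable. The only minor difference is in the rationality step, where the paper observes directly that the $\cal G_1$-leaf $c_1(F_2)$ is the image of a rational curve and hence is $\bb P^1$, so the general $\cal F$-leaf is a $\bb P^1$-fibration over $\bb P^1$; your two-rulings contradiction reaches the same conclusion by a slightly longer path.
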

\begin{proof}
Let $f_i:X \rightarrow B_i$ be the contractions associated to $f_i$.  If either $B_1$ or $B_2$ is a curve, then
since $f_i$ only contracts curves tangent to the foliation, we see that $\cal F$ is algebraically integrable.
So assume that $B_i$ is a surface.

There exists a foliation $\cal G_i$ on $B_i$ such that $\cal F = f_i^*\cal G_i$. Let $p \in B_2$ be a general
point and let $C = f_2^{-1}(p)$. $C$ is a rational curve tangent to $\cal F$ and so $f_1(C)$ is
a rational curve tangent to $\cal G_1$.  Since $p$ is general, we see through a general point
of $B_1$ there is a $\cal G_1$-invariant rational curve which implies that $\cal G_1$, and hence $\cal F$,
is algebraically integrable. We see that the closure of a general leaf is a $\bb P^1$-fibration over
$\bb P^1$, and so the closure of a general leaf (hence any leaf) is rational.
\end{proof}

\begin{lemma}
\label{L:infintelymany-1curves}
Let $M$ be a smooth projective surface.
Suppose that $M$ contains infinitely many $(-1)$-curves.
Then $M$ is rational.
\end{lemma}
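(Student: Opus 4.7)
My plan is to split into cases by the Kodaira dimension of $M$. If $\kappa(M) \geq 0$, then by definition some positive multiple $mK_M$ is linearly equivalent to an effective divisor $D$. By adjunction, every $(-1)$-curve $C$ satisfies $K_M \cdot C = -1$, hence $D \cdot C = -m < 0$, which forces $C$ to be an irreducible component of the support of $D$. Since $D$ has only finitely many components, only finitely many $(-1)$-curves can exist on $M$, contradicting the hypothesis. Therefore $\kappa(M) = -\infty$, and by the Enriques--Kodaira classification $M$ is birational to $B \times \bb P^1$ for some smooth projective curve $B$.

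Next I would rule out the possibility $g(B) \geq 1$. Choose a birational morphism $\sigma \colon M \to M_0$ to a relatively minimal ruled surface $\pi \colon M_0 \to B$, i.e.\ a $\bb P^1$-bundle. Any $(-1)$-curve $C$ on $M$ is smooth rational, so either it is contracted by $\sigma$---and is then one of the finitely many components of the exceptional locus of $\sigma$---or $\sigma(C)$ is a rational curve in $M_0$. In the latter case $\pi \circ \sigma|_C$ is a morphism from $\bb P^1$ to a curve of positive genus, hence constant, so $\sigma(C)$ lies in a fiber of $\pi$ and, since the fibers of the $\bb P^1$-bundle are irreducible, equals that fiber $F_p = \pi^{-1}(p)$. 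If $p$ is not among the finitely many images of the centres blown up by $\sigma$, then $\sigma$ is an isomorphism in a neighborhood of $F_p$, so $C$ would be an irreducible fiber of $\pi \circ \sigma$ with self-intersection $0$, contradicting $C^2 = -1$. Thus $\sigma(C)$ must lie among finitely many fibers of $\pi$, and the preimage of each such fiber is a finite configuration of rational curves. Altogether only finitely many $(-1)$-curves can exist, again a contradiction. Hence $g(B) = 0$ and $M$ is rational.

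The main obstacle is the bookkeeping in the second case; the crucial simplification is the observation that rational curves on a $\bb P^1$-bundle over a base of positive genus are forced to be fibers of the bundle, which reduces what a priori looks like an abundance of candidate $(-1)$-curves to a finite accounting over the finitely many blowup centres of $\sigma$.
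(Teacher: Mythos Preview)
Your proof is correct. It follows the same three-step skeleton as the paper's proof---show $K_M$ cannot be ``positive'', pass to a minimal ruled model, then rule out an irrational base---but the tools you invoke at the first and last steps differ from the paper's.

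For the first step, the paper shows $K_M$ is not pseudoeffective by appealing to the Zariski decomposition: writing a psef divisor as $P+Z$ with $P$ nef and $Z\ge 0$, one sees $D\cdot C<0$ for only finitely many irreducible $C$. Your argument via an effective pluricanonical divisor is more elementary and reaches the same conclusion (for surfaces $\kappa\ge 0$ and $K_M$ psef are equivalent, so nothing is lost). For the third step, the paper observes that the non-contracted $(-1)$-curves map to pairwise disjoint fibers on the $\bb P^1$-bundle, hence are already pairwise disjoint on $M$, and then invokes the Hodge index theorem to bound the number of disjoint $(-1)$-curves by $\rho(M)-1$. Your counting argument---that every surviving $(-1)$-curve must lie over one of the finitely many fibers touched by the blow-up centres, and each such fiber pulls back to a finite tree of curves---avoids Hodge index entirely and is arguably cleaner. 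Both routes are standard; yours trades a little extra bookkeeping for lighter machinery.
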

\begin{proof}
If $D$ is a psef divisor on $M$ then by considering the Zariski decomposition $D = P+Z$
where $P$ is nef and $Z \geq 0$ we see that $D \cdot C <0$ for only finitely many curves $C$.
Thus $K_M$ is not psef.

We may run a $K_M$-MMP which terminates
in either $\bb P^2$ or a $\bb P^1$-bundle over a curve $C$. We are done if we can show that we must have
$C \cong \bb P^1$.

Suppose for sake of contradiction that $g(C) \geq 1$.  Then the image of the $C_i$ under the MMP
must be disjoint rational curves, which implies that there are infinitely many disjoint $(-1)$-curves
on $M$, a contradiction of the Hodge index theorem.  Thus $M$ is rational and we are done.
\end{proof}

\subsection{Proof of Theorem \ref{THM:cone}}
By Lemma \ref{L:2fibretypecontractions} if there is more than one extremal ray of fibre type then
$\cal F$ is algebraically integrable and there is nothing else to prove.
So we may assume there are infinitely many extremal rays either of divisorial type or infinitely many extremal rays
of flipping type.

Let $\pi:\widetilde{X} \rightarrow X$ be a resolution of singlarities of $X$.
Suppose there are infinitely many extremal rays $R_i$ of divisorial type.  Let $D_i = \text{loc}(R_i)$.
Suppose for sake of contradiction that infinitely many $D_i$ are invariant.  Without loss of generality we may assume
that $D_i$ is disjoint from $\text{sing}(X)$ for all $i$.
We apply Theorem \ref{T:Jouanolou}
to produce a map $f:\widetilde{X} \rightarrow C$ such that the $\pi^*D_i$ are contained in
fibres of $f$.  This implies that all but finitely many
$D_i$ are numerically equivalent, and hence $\{R_i\}$ is in fact a finite set, a contradiction.

Thus, infinitely many $D_i$ are quasi-invariant.  Let $C_i$
be a curve spanning $R_i$, again, without loss of generality we may assume that $C_i$ is disjoint for $\text{sing}(X)$ for all $i$.
By Theorem \ref{T:quasiJouanolou}
either $\cal F$ is algebraically integrable or there is a map to a
surface $f:\widetilde{X} \rightarrow S$ contracting $\pi_*^{-1}(C_i)$ for all $i$.  Notice that in the latter case since the
$\pi_*^{-1}(C_i)$ are all contained in the fibres
of $f$ we see that all but finitely many $C_i$ must be in the same numerical
equivalence class, a contradiction, thus $\cal F$ must be algebraically integrable.
If $M$ is the closure of a general leaf,
we see that $M\cap D_i \subset M$ is a $(-1)$-curve and so, by Lemma \ref{L:infintelymany-1curves},
$M$ is rational.

It remains to treat the case where there are infinitely many extremal rays of flipping type.
According to Lemma \ref{L:intersects}, each flipping curve
must intersect $\text{sing}(X)$ or be contained in $\text{sing}(\cal F)$.
Since $X$ has terminal singularities $\text{sing}(X)$ is a finite collection of points.
As the singular set $\text{sing}(\cal F)$ consists of finitely many components,  we may assume that
we have infinitely many $C_i$ passing through a single point $p \in \text{sing}(X)$.

According to \cite[Corollary 6.4]{Spicer17}, there exists a $\cal F$-invariant analytic subvariety $L$ containing $p$ which contains the $C_i$.
To prove Theorem \ref{THM:cone} it suffices to verify that $L$ is algebraic and rational.

Let $\mu:M \rightarrow L$ be the minimal resolution of $L$
and notice that we can write $\mu^*K_{\cal F} = K_M+\Delta$ where $\Delta \geq 0$.
Let $C'_i$ be the strict transform of $C_i$ under $\mu$.
Observe that $\text{supp}(\Delta)\subset \text{exc}(\mu)\cup \mu^{-1}(\text{sing}(\cal F))$
and so
throwing away finitely many $C'_i$ we may assume that $\Delta \cdot C'_i \geq 0$ for all $i$.

Since each $C_i$ is a flipping
curve we know that $(C'_i)^2 <0$ and $(K_M+\Delta)\cdot C'_i <0$.
By adjunction we know that
$$(K_M+\Delta+C'_i)\cdot C'_i \geq -2$$
in particular
$$(C'_i)^2 = -1$$
for all $i$.

Assume that $\mu$ is not an isomorphism at $p$ (the case where $\mu$ is an isomorphism is easier and can be handled
in a similar manner).
There exists some irreducible component $E$ of $\mu^{-1}(p)$
such that $C'_i \cap E \neq \emptyset$ for infinitely many $i$.
Let $N = -2E^2+1$ and let
$\Sigma = 2E+\sum_{i = 1}^N C'_i$.
We claim that $N_{\Sigma/M}$ is ample. Indeed
for all $1 \leq j \leq N$
$$\Sigma \cdot C'_j  = 2E\cdot C'_j +\sum_{i = 1}^N C'_i\cdot C'_j
\geq 2E\cdot C'_j +(C'_j)^2 \geq 2+(-1) \geq 1$$
and
$$\Sigma \cdot E = 2E^2 +\sum_{i = 1}^NC'_i\cdot E \geq 2E^2 +N \geq 1$$
which proves our claim.

If $\widehat{M}$ is the formal completion of $M$ along $\Sigma$ then \cite[Theorem 6.7]{Hartshorne68}
implies that the field $\mathbb C(\widehat{M})$ of meromorphic functions on $\widehat{M}$ is of transcendence degree
$2$ over $\bb C$.  Since $\mathbb C(\widehat{M})$ is a field extension of $\mathbb C(\overline{L}^{Zar})$ where
$\overline{L}^{Zar}$
is the Zariski closure of $L$ we see that $\overline{L}^{Zar}$ is an algebraic surface.
It remains to show that $\overline{L}^{Zar}$ is rational.
Indeed, as above we see that the minimal resolution of
$\overline{L}^{Zar}$ contains infinitely many $(-1)$-curves in which case
we apply Lemma \ref{L:infintelymany-1curves} to conclude.
\qed

\bibliography{math}

\newcommand{\etalchar}[1]{$^{#1}$}
\begin{thebibliography}{CLNL{\etalchar{+}}07}

\bibitem[AD14]{AD14}
C.~Araujo and S.~Druel.
\newblock On codimension 1 del {P}ezzo foliations on varieties with mild
  singularities.
\newblock {\em Math. Ann.}, 360(3-4):769--798, 2014.

\bibitem[BN99]{BN99}
Marco Brunella and Marcel Nicolau.
\newblock Sur les hypersurfaces solutions des \'equations de {P}faff.
\newblock {\em C. R. Acad. Sci. Paris S\'er. I Math.}, 329(9):793--795, 1999.

\bibitem[Bru00]{Brunella00}
M.~Brunella.
\newblock {\em Birational geometry of foliations}.
\newblock Monograf\'\i as de Matem\'atica. Instituto de Matem\'atica Pura e
  Aplicada (IMPA), Rio de Janeiro, 2000.

\bibitem[Can04]{Cano}
F.~Cano.
\newblock Reduction of the singularities of codimension one singular foliations
  in dimension three.
\newblock {\em Ann. of Math. (2)}, 160(3):907--1011, 2004.

\bibitem[CC92]{CC92}
F.~Cano and D.~Cerveau.
\newblock Desingularization of nondicritical holomorphic foliations and
  existence of separatrices.
\newblock {\em Acta Math.}, 169(1-2):1--103, 1992.

\bibitem[CLNL{\etalchar{+}}07]{CLNLPT}
D.~Cerveau, A.~Lins-Neto, F.~Loray, J.~V. Pereira, and F.~Touzet.
\newblock Complex codimension one singular foliations and {G}odbillon-{V}ey
  sequences.
\newblock {\em Mosc. Math. J.}, 7(1):21--54, 166, 2007.

\bibitem[CM92]{CM92}
F.~Cano and J.-F. Mattei.
\newblock Hypersurfaces int\'egrales des feuilletages holomorphes.
\newblock {\em Ann. Inst. Fourier (Grenoble)}, 42(1-2):49--72, 1992.

\bibitem[CP14]{MR3294560}
G.~Cousin and J.~V. Pereira.
\newblock Transversely affine foliations on projective manifolds.
\newblock {\em Math. Res. Lett.}, 21(5):985--1014, 2014.

\bibitem[Ghy00]{Ghys00}
E.~Ghys.
\newblock \`a propos d'un th\'eor\`eme de {J}.-{P}. {J}ouanolou concernant les
  feuilles ferm\'ees des feuilletages holomorphes.
\newblock {\em Rend. Circ. Mat. Palermo (2)}, 49(1):175--180, 2000.

\bibitem[Gro05]{MR2171939}
Alexander Grothendieck.
\newblock {\em Cohomologie locale des faisceaux coh\'erents et th\'eor\`emes de
  {L}efschetz locaux et globaux ({SGA} 2)}, volume~4 of {\em Documents
  Math\'ematiques (Paris) [Mathematical Documents (Paris)]}.
\newblock Soci\'et\'e Math\'ematique de France, Paris, 2005.
\newblock S\'eminaire de G\'eom\'etrie Alg\'ebrique du Bois Marie, 1962,
  Augment\'e d'un expos\'e de Mich\`ele Raynaud. [With an expos\'e by Mich\`ele
  Raynaud], With a preface and edited by Yves Laszlo, Revised reprint of the
  1968 French original.

\bibitem[Har68]{Hartshorne68}
R.~Hartshorne.
\newblock Cohomological dimension of algebraic varieties.
\newblock {\em Ann. of Math. (2)}, 88:403--450, 1968.

\bibitem[Jou78]{Jouanolou}
J.~P. Jouanolou.
\newblock Hypersurfaces solutions d'une \'equation de {P}faff analytique.
\newblock {\em Math. Ann.}, 232(3):239--245, 1978.

\bibitem[Jou79]{Jouanolou79}
J.~P. Jouanolou.
\newblock {\em \'Equations de {P}faff alg\'ebriques}, volume 708 of {\em
  Lecture Notes in Mathematics}.
\newblock Springer, Berlin, 1979.

\bibitem[Kim97]{Kim97}
M.~Kim.
\newblock Pfaffian equations and the {C}artier operator.
\newblock {\em Compositio Math.}, 105(1):55--64, 1997.

\bibitem[KM98]{KM98}
J.~Koll{\'a}r and S.~Mori.
\newblock {\em Birational geometry of algebraic varieties}, volume 134 of {\em
  Cambridge tracts in mathematics}.
\newblock Cambridge University Press, 1998.

\bibitem[LPT]{LPT11}
F.~Loray, J.~Pereira, and F.~Touzet.
\newblock {Singular foliations with trivial canonical class}.
\newblock arXiv:1107.1538v6.

\bibitem[MP97]{MR1468476}
Yoichi Miyaoka and Thomas Peternell.
\newblock {\em Geometry of higher-dimensional algebraic varieties}, volume~26
  of {\em DMV Seminar}.
\newblock Birkh\"auser Verlag, Basel, 1997.

\bibitem[MP05]{MP05}
L.~G. Mendes and J.~V. Pereira.
\newblock Hilbert modular foliations on the projective plane.
\newblock {\em Comment. Math. Helv.}, 80(2):243--291, 2005.

\bibitem[Per06]{Pereira06}
J.~V. Pereira.
\newblock Fibrations, divisors and transcendental leaves.
\newblock {\em J. Algebraic Geom.}, 15(1):87--110, 2006.
\newblock With an appendix by Laurent Meersseman.

\bibitem[PT13]{PT13}
J.~V. Pereira and F.~Touzet.
\newblock Foliations with vanishing {C}hern classes.
\newblock {\em Bull. Braz. Math. Soc. (N.S.)}, 44(4):731--754, 2013.

\bibitem[Spi]{Spicer17}
C.~Spicer.
\newblock Higher dimensional foliated {M}ori theory.
\newblock arXiv:1709.06850.

\end{thebibliography}
\bibliographystyle{alpha}
\end{document}